\DeclareMathOperator{\ch}{ch}
\DeclareMathOperator{\inte}{int}
\DeclareMathOperator{\Rext}{Rext}
\DeclareMathOperator{\Rch}{Rch}
\DeclareMathOperator{\Extrem}{ext}
\DeclareMathOperator{\Atom}{Atom}
\DeclareMathOperator{\Aut}{Aut}
\newcommand{\Lat}{\mathscr{K}}
\newcommand{\RLat}{\mathscr{R}}
\newtheorem{theorem}{Theorem}[section]
\newtheorem{lemma}[theorem]{Lemma}
\newtheorem{proposition}[theorem]{Proposition}
\newtheorem{corollary}[theorem]{Corollary}
\theoremstyle{definition}
\newtheorem{definition}[theorem]{Definition}
\newtheorem{example}[theorem]{Example}
\theoremstyle{remark}
\newtheorem{remark}[theorem]{Remark}
\begin{document}

\setcounter{page}{1}

\title[Convex hull lattices point generated]{Convex hull lattices point generated}
\author[Cardó]{Carles Cardó}
\thanks{{\scriptsize
\hskip -0.4 true cm MSC(2010): 52A10, 06B99, 52B99.
\newline Keywords: Convex hull lattices point generated, Convex hull lattice, Discrete geometry, Convex geometry, Planar configuration.\\
$*$ Under review at \emph{Journal of Discrete \& Computational Geometry}. }}

\begin{abstract}
The simplest way to generate a lattice of convex sets is to consider an initial set of points and draw segments, triangles, and any convex hull from it, then intersect them to obtain new points, and so forth. The result is an infinite lattice for most sets, while only a few initial sets of points perform a finite lattice. By giving an adequate notion of the configuration of points, we identify which sets in the plane define a finite convex hull lattice: four regular families and one sporadic configuration. We explore configurations in the space and higher dimensions.
\end{abstract}

\maketitle

\section {Introduction}\label{Notation}

The class of lattices of convex sets in $\mathbb{R}^d$ is enormous. However, a straightforward way to get one of such lattices is to consider an initial finite set of points, draw segments, triangles and any convex hull, and then intersect them to obtain new points. For most of the initial sets, the generated lattice is infinite. For example, take the vertices of a regular pentagon. By drawing all possible segments, we obtain a pentagon inside the initial pentagon again, and by repeating the operation, we get a third pentagon and so forth. The lattice is finite for only a few configurations of points. We aim to identify such sets of points, first in the plane and later to gain insight into higher dimensions. As far as we know, the literature does not refer to that simple problem close to recreative mathematics, \cite{brass2005research, Gruber1993handbook, toth2017handbook}. Although the answer is simple in the case of the plane, we will need to study a combinatorial version of the convex lattices and perform an adequate point configuration notion.

Let us define the main object of interest for this article.
Recall that the \emph{convex hull} of a set $X \subseteq \mathbb{R}^d$, denoted by $\ch(X)$ is the union of all the possible segments with extremes in $X$.
A set $X$ is said to be a \emph{convex} if $\ch(X)=X$.
The symbol $\ch$ is a \emph{closure operator} \cite{davey2002introduction, Burris}, and hence the set of convex sets in $\mathbb{R}^d$, denoted by $\Lat^d$ is a lattice with operations $X\vee Y= \ch (X \cup Y)$ and $X \wedge Y=X \cap Y$. We will call to abbreviate \emph{ch-lattices} the sublattices of $\Lat^d$.

\begin{definition}\label{DefCHLat} Given a set $X \subseteq \mathbb{R}^d$, we denote by $\Lat^d(X)$ the least sublattice of $\mathscr{K}^d$ containing the set $\{ \{x\} \mid x\in X \}$. We call those lattices \emph{point generated}, and we say that they are \emph{finitely point generated} when $X$ is finite.
\end{definition}
 
Since the intersection and the convex hull of a pair of bounded polytopes are also bounded,  by the principle of structural induction for algebras, elements of $\Lat^d(X)$ are bounded polytopes.
 
Fix the notation and elementary concepts. When necessary, we will abbreviate point singletons as $a=\{a\}$ and the segments as $ab=\{a\}\vee \{b\}=a\vee b$. Given three non-collinear points $a,b,c$ in the plane, $\triangle abc$ denotes the triangle with vertices a,b,c, that is, $\triangle abc=a\vee b \vee c$.
Given a lattice $\mathcal{L}=(L, \vee, \wedge)$ with the order $\leq$, we said that $y \in L$ \emph{covers} $x \in L$ if $x< y$ and for any other $z \in L$ or $x=z$ or $z=y$. If the lattice has a bottom element $0$, we say that $a \in L$ is an \emph{atom} if $a$ covers $0$. We denote the set of atoms by $\Atom(\mathcal{L})$.
A lattice with a bottom element $0$ is \emph{atomic} if, for each non-zero element $x$, there exists an atom $a$ such that $a\leq x$. A lattice is atomistic if it is atomic and every element is a join of some finite set of atoms.
For other elementary concepts on lattice and order theory, we will follow \cite{davey2002introduction} or \cite{Burris}.

We write $\overline{abc}$ to mean that the points $a$,$b$, and $c$ are collinear. A subset $S \subseteq X$ is said \emph{maximal collinear in $X$} if $|S|\geq 3$, all the points in $S$ are collinear and for any $x \in X\setminus S$, the points in $S\cup\{x\}$ are not collinear.
If we want to note that $b$ is between $a$ and $c$, we will write $a{-}b{-}c$, provided $a,b,c$ are different and collinear. A set of points in the plane is said to be \emph{in general position} if it does not contain three collinear points. A set of points $X$ is said to be in \emph{convex position} if for any $x\in X$, $x\not \in \ch(X\setminus \{x\})$. If $X$ is in convex position, it is in general position.
$H^+(ab, c)$ denotes the unique open half-plane delimited by the line that passes by $a$ and $b$ and contains $c$. We define the set $\Theta(a,b,c) = H^+(ab, c) \cap H^+(ac, b) \cap H^-(bc, a)$. If $a,b,c$ are not collinear and $d \in \Theta(a,b,c)$, then $ad \wedge bc\not=\emptyset$.
$\inte (X)$ denotes the interior of a set $X \subseteq \mathbb{R}^d$ and $\partial(X)$ denotes the boundary in the usual Euclidean topological sense. $\Extrem(X)$ denotes the set of extreme points of $X$.
For other elementary terms on convex geometry, we will follow \cite{brondsted2012introduction} or \cite{munkrestopology} for topology.

\section{Configurations and morphisms of configurations}

Any set of a fixed number of collinear points defines the same isomorphic ch-lattice point generated, but this is just a particularity of the one-dimensional space. The goal is to define an equivalence of sets of points that reflects the algebraic properties of its ch-lattice. The literature on discrete geometry offers multiple non-equivalent meanings of point configuration, which are inadequate here. A natural attempt is to consider that two configurations $X,Y$ are equivalent if a mapping $f:\mathbb{R}^d \longrightarrow \mathbb{R}^d$ preserving the convexity such that $f(X)=Y$ exists. These mappings, so-called \emph{rational affine}, have been well studied \cite{artstein2012order}. However, these mappings are too rigid for our proposes. The solution is to consider the relative convex hull \cite{bergman2005lattices}.

\begin{definition} By a \emph{configuration of dimension $d$}, we mean simply a set of points in $\mathbb{R}^d$, for some $d\geq 0$, such that its affine hull has dimension $d$. Given a subset $A \subseteq X$, The \emph{convex hull of $A$ relative to a configuration $X$} is the set
$$\Rch_X(A)=\ch(A) \cap X.$$
When the context gives the configuration $X$, we will abbreviate $\Rch(A)=\Rch_X(A)$. We say that a subset  $A \subseteq X$ is relatively convex (in the context $X$) if $\Rch(A)=A$. 
Given two configurations $X,Y$, a mapping $f:X\longrightarrow Y$ is said a \emph{morphism of configurations} if for any subset $A \subseteq X$
$$f(\Rch_X(A))=\Rch_Y(f(A)).$$
An \emph{isomorphism} $f$ is a bijective morphism. We say that $X$ and $Y$ are \emph{equivalent}, denoted by $X\equiv Y$, if there is an isomorphism between them.
\end{definition}

The composition of morphisms is a morphism of configurations, and the inverse of an isomorphism is also an isomorphism. The terms monomorphism, epimorphism, and automorphism have the usual algebraic sense. The set of automorphisms is denoted by $\Aut(X)$. We call an \emph{abstract configuration} an equivalence class of $\equiv$, but, as it is usual in algebra, by a mild abuse of terminology, we will omit the adjective ``abstract''. Similarly, when a monomorphism exists $X\longrightarrow Y$, we will say that $X$ is a \emph{subconfiguration} of $Y$.

\begin{example} Consider a triangle $\triangle abc$ with a point $d$ in the interior, $T=\{a,b,c,d\}$, and consider a square with vertices $S$. $T$ and $S$ are not equivalent, because $d \in \ch(\{a,b,c\})$, but no triangle in $S$ contains a point.

Now consider three collinear points $C=\{x,y,z\}$, $x{-}y{-}z$. There is an epimorphism of configurations $f:T \longrightarrow C$, given by $f(a)=x$, $f(b)=f(c)=z$, $f(d)=y$.
\end{example}

Contrary to our intuition, the dimension of a configuration, defined as the dimension of its affine hull, is not invariant under the equivalence of configurations. It turns out that a tetrahedron is equivalent to a convex quadrilateral. Moreover, any set of $n$ points in convex position in $\mathbb{R}^d$ is equivalent to the regular $n$-agon in $\mathbb{R}^2$. 
Therefore, we need some invariants to discriminate non-equivalent configurations. Equivalent configurations have the same size. If $f$ is a morphism of configurations, then it preserves betweenness. If $a{-}b{-}c$ in $X$, then $b \in \ch(\{a,c\})$, and $f(a),f(b),f(c)$ are collinear. Therefore, $f(b) \in \ch(\{f(a), f(c)\}$ and $f(a){-}f(b){-}f(c)$. Two useful invariants are the following. First, define the set of \emph{extreme points relative to $X$} as
$$\Rext X=\{ x \in X \mid X\setminus \{x\} \mbox{ is relatively convex}\, \}.$$
It is easily seen that if $X\equiv Y$, $\Rext X\equiv \Rext Y$. The following invariant is, in fact, a family of invariants. Let $Z$ be a class of equivalence of configurations. We count subconfigurations $Z$ in $X$:
$$\#_Z(X)=\mbox{ the number of subconfigurations equivalent to $Z$ in $X$}.$$ 
The first obvious form of classifying finite configurations is by the number of points. Then, we can group them by their sets of relative extreme points. When two configurations have the same set of relative extreme points, we can discern by $\#_z(\cdot)$ for some adequate $Z$. 

If $f:X \longrightarrow Y$ is an isomorphism of configurations and $x\in X$, then $X\setminus\{x\} \equiv Y\setminus \{f(x)\}$. So, configurations of order $n+1$ can be obtained from configurations of order $n$ by adding a point. According to the location of the point, we obtain different configurations. When the number of points grows, listing all configurations is difficult. Fortunately, five points will suffice for the next sections to prove the main theorems.

There is only one configuration with sizes one and two, $L_1$ and $L_2$. See Figure~\ref{DefClasses} for the notations. There are two configurations of size three, $L_3$ and $T_2$. They are not equivalent because $\Rext L_3$ is a segment but $\Rext T_2$ is a triangle. We obtain four-point configurations by adding a point in configurations of size three: $L_4, T_3, I_{0,2}$, and $I_{1,1}$. Except by $T_3$ and $I_{0,2}$, we know that all the pairs of configurations are not equivalent because of their relative extreme points. However, $\#_{T_2}(T_3)=3$ and $\#_{T_2}(I_{0,2})=4$. In the same manner, we obtain the twelve five-point configurations. Using the commented invariants, the reader can check that these are all the configurations. 

The invariant $\#_Z( \cdot)$ satisfies the equalities:
$$\#_{L_3}(X)+ \#_{T_2}(X) ={ |X| \choose 3},$$
provided $|X|\geq 3$, and:
$$\#_{L_4}(X)+ \#_{T_3}(X) +\#_{I_{0,2}}(X)+ \#_{I_{1,1}}(X) ={ |X| \choose 4},$$
if $|x|\geq 4$. More in general, when $|X|\geq k$,
$$ \sum_{Z} \#_Z(X)= { |X| \choose k},$$
where the sum runs over all the configurations $Z$ of size $k$.

\begin{figure}[tb] 
\vspace{10mm}
\centering
\begin{overpic}[scale=0.15]{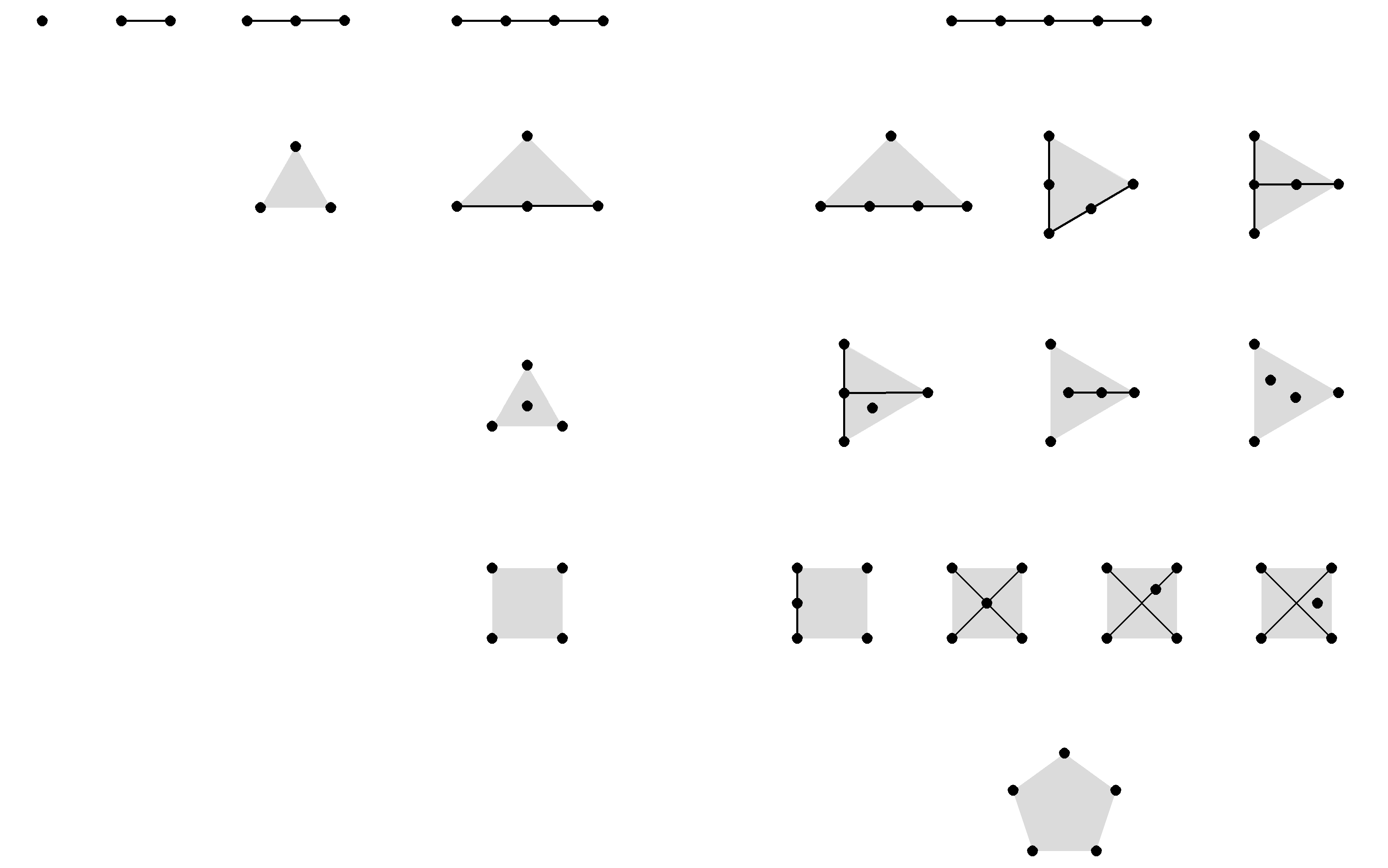}
 \put (2,64) {$L_1$} 
 \put (9,64) {$L_2$} 
 \put (20,64) {$L_3$} 
 \put (37,64) {$L_4$} 
 \put (75,64) {$L_5$} 

  \put (17,53) {$T_2$} 
  \put (33,53) {$T_3$} 
  \put (66,53) {$T_4$} 
  \put (78,53) {$V_5$} 
  \put (92,53) {$D_{0,2}$}

  \put (35.5,38.5) {$I_{0,2}$} 
  \put (63,38) {$R$} 
  \put (78,38) {$I_{0,3}$} 
  \put (93,38) {$G$} 

  \put (36,24) {$I_{1,1}$}

  \put (58.5,24) {$R'$} 
  \put (69,24) {$D_{1,1}$} 
  \put (80,24) {$I_{1,2}$} 
  \put (92,24) {$G'$} 
     
  \put (70.5,8) {$P_5$} 
  
\end{overpic}
\caption{Planar configurations for 1,2,3,4, and 5 points and its designations. See Definition~\ref{DefClasses} for designations $L_{n}$, $T_n$, $D_{p,q}$, and $I_{p,q}$. Shadows indicate the convex hulls. Lines either indicate collinear points or a line where a point cannot lay to preserve the configuration. }\label{Fig0}
\end{figure}

\section{Complete and finitely completable configurations}

\begin{definition} Given a configuration $X$ of dimension $d$, its \emph{completion} is the set of singletons points of $\Lat^d(X)$, and it will be denoted by  $\overline{X}=\{ x \in \mathbb{R}^d \mid \{x\} \in \Lat^d(X) \}$. The set $X$ is said \emph{complete} if $\overline{X}=X$. $X$ is said \emph{finitely completable} if $\overline{X}$ is finite.
\end{definition}

Trivially, if $X$ is finite and complete, it is finitely completable. Notice that $\overline{X}$ is always complete and that $\Lat^d(X)=\Lat^d(\overline{X})$. 
Notice also that if $X \subseteq Y$, then $\Lat^d(X) \subseteq \Lat^d(Y)$. Then, if $X \subseteq Y$, and $Y$ is finitely completable, $X$ is.

\begin{example} If we add the centre to a convex quadrilateral configuration $Q$, we get a non-equivalent configuration $\overline{Q}\equiv D_{1,1}$. We have that $\Lat^2(Q)= \Lat^2(\overline{Q})$. Thus, $Q$ is finitely completable.
\end{example}

\begin{definition} Given a configuration $X$,  $\Rch_X (\cdot)$ is a closure operator, whereby it makes sense to define the lattice $\RLat^d(X)=\{ \Rch_X(Y) \mid Y\subseteq X\}$ with operations $A\vee_R B=\Rch_X(A\cup B)$ and $A\wedge_R B=A \cap B$.
\end{definition}

We will prove that if $X$ is complete, $\RLat^d(X)$ is the combinatoric version of $\Lat^d(X)$, or more precisely, they are isomorphic. We need first a lemma.

\begin{lemma} \label{LemmaExtreme} Let $X$ be complete. For any $A\in \Lat^d(X)$, $\Extrem(A) \subseteq X$. 
\end{lemma}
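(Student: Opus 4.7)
The plan is to prove the lemma by structural induction on the construction of $A$ in $\Lat^d(X)$, following the generators (singletons of $X$), joins, and meets. The base case is immediate: if $A=\{x\}$ with $x\in X$, then $\Extrem(A)=\{x\}\subseteq X$. For the join case $A=B\vee C=\ch(B\cup C)$, I would invoke the standard fact that extreme points of a convex hull of compact convex sets lie among the extreme points of the summands, so $\Extrem(A)\subseteq \Extrem(B)\cup \Extrem(C)\subseteq X$ by the inductive hypothesis. The interesting step is the meet case.

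So suppose $A=B\wedge C=B\cap C$ with $B,C\in \Lat^d(X)$ and assume inductively that $\Extrem(B),\Extrem(C)\subseteq X$. Let $p\in\Extrem(A)$. The strategy is to produce an element of $\Lat^d(X)$ equal to $\{p\}$ and then apply completeness. For this, let $F_B$ be the unique face of the polytope $B$ whose relative interior contains $p$, and analogously define $F_C$. Both $F_B$ and $F_C$ are polytopes whose extreme points are among those of $B$ and $C$ respectively, hence lie in $X$. Consequently $F_B=\ch(\Extrem(F_B))$ is a join of finitely many singletons $\{x\}$ with $x\in X$, so $F_B\in \Lat^d(X)$; similarly $F_C\in\Lat^d(X)$.

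The key claim is then $F_B\cap F_C=\{p\}$. The proof is by contradiction: if there were some $q\in F_B\cap F_C$ with $q\neq p$, then the whole segment $[p,q]$ would lie in $F_B\cap F_C\subseteq B\cap C=A$. Since $p$ sits in the relative interior of $F_B$ and the line through $p$ and $q$ lies in the affine hull of $F_B$, one can extend past $p$ by a small amount while staying inside $F_B$; the same argument works for $F_C$. This yields a nondegenerate segment inside $A$ having $p$ in its interior, contradicting the extremality of $p$ in $A$. Therefore $\{p\}=F_B\wedge F_C$ is an element of $\Lat^d(X)$, and completeness of $X$ forces $p\in X$.

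The only genuinely delicate point in this plan is the meet case, where the extreme points of $B\cap C$ can be brand new ``cut'' vertices that are not extreme points of $B$ or of $C$; everything hinges on representing each such new vertex as the intersection of two already-available lattice elements. Using the minimal face containing $p$ on each side does the job cleanly, and it is exactly here that the hypothesis ``$X$ complete'' is used, via $\overline{X}=X$, to conclude that the singleton $\{p\}\in\Lat^d(X)$ corresponds to a point of $X$.
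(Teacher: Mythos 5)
Your proof is correct, and while the overall skeleton (structural induction over the generation of $\Lat^d(X)$, with the standard fact $\Extrem(B\vee C)\subseteq\Extrem(B)\cup\Extrem(C)$ handling joins) matches the paper, your treatment of the meet case is genuinely different. The paper triangulates $B$ and $C$ into simplices and, citing computational-geometry results, locates every new extreme point of $S_A\wedge S_B$ inside a union of intersections of the form $xy\wedge F$ (an edge of one simplex against a maximal facet of the other), each of which is an element of $\Lat^d(X)$ that is a single point, so completeness applies. You instead fix an arbitrary $p\in\Extrem(B\cap C)$, take the unique faces $F_B$, $F_C$ whose relative interiors contain $p$, observe that each is a join of singletons from $X$ (since faces of a polytope inherit their vertices from the polytope, and the inductive hypothesis puts those in $X$), and show $F_B\wedge F_C=\{p\}$ by the extension-past-$p$ argument, so again completeness gives $p\in X$. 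Your route is cleaner and more self-contained: the contradiction argument using $p\in\mathrm{relint}(F_B)$ is elementary and airtight, whereas the paper's reduction leans on an algorithmic reference and on the tacit assumption that an edge meets a facet in at most a point (which requires a word about degenerate, non-transverse intersections). What the paper's version buys in exchange is a uniform, explicit superset of $\Extrem(S_A\wedge S_B)$ built from finitely many lattice elements, rather than a point-by-point certificate; both suffice for the lemma.
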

\begin{proof} To establish the result, we need to know the extreme points of $A \wedge B$ and $A \vee B$ in terms of the extreme points of $A$ and $B$. For the join operation, we have a direct relation: $\Extrem(A \vee B) \subseteq \Extrem (A) \cup \Extrem (B)$.  To calculate the intersection of two polytopes, there are several algorithms from the beginning of the computational geometry; see \cite{preparata2012computational, muller1978finding, chazelle1992optimal}. First, we can triangulate the polytopes into simplices and calculate the intersection of each pair of simplices. Let $S_A$ be a simplex of the triangulation of $A$ and $S_B$ of $B$. Then, to calculate $S_A \wedge S_B$ it suffices  calculate $\partial S_A \cap \partial S_B$, \cite[p. 271]{preparata2012computational}. If $\partial S_A \cap \partial S_B=\emptyset$, either one of the simplices is inside the other, or the intersection is empty. In both cases $\Extrem(S_A \wedge S_B) \subseteq \Extrem (S_A) \cup \Extrem(S_B)$. If the intersection is not empty, then $\Extrem (S_A\wedge S_B) \subseteq \Extrem (\partial S_A \wedge \partial S_B)$.  The boundary of a convex polytope $P$ is conformed by its maximal facets, denoted by $\mathrm{maxF}(P)$. Then, we must only intersect the maximal facets of $S_A$ with the maximal facets of $S_B$. However, even we can improve it. To find the extreme points, we only need to intersect the maximal facets of $S_A$ with edges of $S_B$ and reversely. That is,
\begin{align*}
\Extrem(\partial S_A \wedge \partial S_B) &   \subseteq  \left( \bigcup_{ \substack{F \in \mathrm{maxF}(S_A) \\ x,y \,\in\, \Extrem(S_B) }} xy \wedge F \right)\cup \left( \bigcup_{ \substack{F \in \mathrm{maxF}(S_B) \\ x,y \, \in \, \Extrem(S_A) }} xy \wedge F \right).
\end{align*}  
Look at the first pair of brackets. Since $\Lat^d(X)$ is generated by joins and meets of points in $X$, if we suppose that $\Extrem(S_A), \Extrem(S_B) \subseteq X$, then for each $F \in \mathrm{maxF}(S_A)$, $F \in \Lat^d(X)$, and for each $x,y\in \Extrem(S_B)$, $xy\in \Lat^d(X)$. Since $xy \wedge F$ is a point (and therefore, an atom) and $X$ is complete, $xy \wedge F \in X$. The same holds for the second pair of brackets. Therefore, for each pair of simplices, $\Extrem(S_A \wedge S_B) \subseteq X$. We can sum up all in a single statement:
 $$\Extrem(A), \Extrem(B) \subseteq X \implies \Extrem(A \vee B),\, \Extrem(A \wedge B) \subseteq X.$$
By the structural induction principle, the result follows. 
\end{proof}

\begin{proposition} \label{PropositionIso} If $X$ is complete, $\Lat^d(X) \cong \RLat^d(X)$.
\end{proposition}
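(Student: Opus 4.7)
The plan is to exhibit the natural map $\varphi \colon \mathscr{K}^d(X) \to \mathscr{R}^d(X)$ defined by $\varphi(P) = P \cap X$ and verify that it is a bijective lattice homomorphism, hence an isomorphism. The entire argument hinges on a single identity extracted from Lemma~\ref{LemmaExtreme}: because $P$ is a bounded polytope with $\Extrem(P) \subseteq X$, we have
$$P \;=\; \ch(\Extrem(P)) \;\subseteq\; \ch(P \cap X) \;\subseteq\; P,$$
so $P = \ch(P \cap X)$. The same chain shows $\varphi(P) = \Rch_X(\Extrem(P)) \in \mathscr{R}^d(X)$, establishing well-definedness.

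For the homomorphism check, meets are immediate from set-theoretic distributivity: $\varphi(P \wedge Q) = (P \cap Q) \cap X = (P \cap X) \cap (Q \cap X) = \varphi(P) \wedge_R \varphi(Q)$. For joins, I would apply $P = \ch(P \cap X)$ to each argument together with the standard convex-hull identity $\ch(\ch(S_1) \cup \ch(S_2)) = \ch(S_1 \cup S_2)$ to obtain $\ch(P \cup Q) = \ch((P \cap X) \cup (Q \cap X))$; intersecting with $X$ yields $\varphi(P \vee Q) = \Rch_X(\varphi(P) \cup \varphi(Q)) = \varphi(P) \vee_R \varphi(Q)$.

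Injectivity is then a direct consequence of $P = \ch(P \cap X)$: if $\varphi(P) = \varphi(Q)$ then $P = \ch(\varphi(P)) = \ch(\varphi(Q)) = Q$. For surjectivity, any $A \in \mathscr{R}^d(X)$ is relatively convex, so $A = \ch(A) \cap X$; in the finite setting driving the paper, $A \subseteq X$ is finite and therefore $P := \bigvee_{a \in A} \{a\} \in \mathscr{K}^d(X)$ is a finite join of atomic generators satisfying $\varphi(P) = \ch(A) \cap X = A$. Since a bijective lattice homomorphism is automatically an isomorphism, the proof is complete.

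The step to be careful about is the surjectivity, since one must genuinely produce a preimage inside $\mathscr{K}^d(X)$: this is transparent whenever $X$ is finite (the case of interest, consistent with the paper's stipulation that elements of $\mathscr{K}^d(X)$ are bounded polytopes), and the other three properties flow essentially mechanically from the identity $P = \ch(P \cap X)$ supplied by Lemma~\ref{LemmaExtreme}.
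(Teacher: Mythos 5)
Your proof is correct and follows essentially the same route as the paper: the same map $\Phi(P)=P\cap X$, the same key identity $P=\ch(P\cap X)$ extracted from Lemma~\ref{LemmaExtreme}, and the same meet/join verifications (the paper packages bijectivity by exhibiting $\ch(\cdot)$ as a two-sided inverse rather than checking injectivity and surjectivity separately, but that is cosmetic). If anything you are slightly more careful than the paper on surjectivity, where one must know that $\ch(A)$ actually lies in $\Lat^d(X)$ for $A\in\RLat^d(X)$ --- a point the paper glosses over and which genuinely requires $A$ to be finite.
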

\begin{proof} Let $\Phi:  \Lat^d(X) \longrightarrow \RLat^d(X)$, $\Phi(A)=A\cap X$. 
Let us check that $\Phi$ is a bijection and its inverse is $\ch(\cdot)$.  
Clearly, if $A \in \RLat^d(X)$, then $A$ is relatively convex, $\Rch(A)=A$, and then  $\Phi(\ch(A))=\ch(A)\cap X=\Rch(A)=A$. 
Let $B \in \Lat^d(X)$. 
 By Lemma~\ref{LemmaExtreme} above, $\Extrem (B) \subseteq X$. Clearly $\Extrem (B) \subseteq B$, and then $\Extrem (B) \subseteq B\cap X$. Since $\ch$ is monotone, $\ch(\Extrem B) \subseteq \ch(B \cap X)$, that is $B\subseteq \ch(B\cap X)$. Next, consider the other direction of the last inclusion. $B\cap X \subseteq B$. Therefore, $\ch(B \cap X) \subseteq \ch(B)$. Since $B$ is a convex set, $\ch(B)=B$, and then $\ch(B \cap X) \subseteq B$. Hence, $\ch ( \Phi (B))=\ch ( B \cap X)=B$. Thus, $\Phi \circ \ch$ is the identity on $\RLat^d(X)$ and $\ch \circ \, \Phi$ is the identity on $\Lat^d(X)$. 

Next, see that $\Phi$ is a morphism of lattices. For the meet operation, it is trivial: 
\begin{align*}
\Phi(A\wedge B)&=\Phi(A \cap B) =A\cap B \cap X \\
&=A \cap X \cap B \cap X=\Phi(A)\cap \Phi(B)\\
&=\Phi(A)\wedge_R \Phi(B). \end{align*}
For the join operation, $\Phi(A \vee B)=\ch(A\cup B)\cap X$. Notice that we cannot write $\ch(A\cup B)\cap X= \Rch(A\cup B)$, because $A$ and $B$ are not subsets of $X$, but of $\mathbb{R}^d$. However, using $\ch ( B \cap X) = B$, proved in the above paragraph,
\begin{align*}
\Phi(A \vee B) &= \ch(A\cup B)\cap X\\
 &= \ch((A\cup B)\cap X) \cap X\\
&=\ch((A\cap X)\cup (B\cap X)) \cap X\\
&=\Rch((A\cap X)\cup (B\cap X))\\
&=\Rch(A\cap X)\vee_R \Rch(B\cap X)\\
&=(\ch(A\cap X) \cap X)\vee_R (\ch(B\cap X)\cap X)\\
&=(A \cap X)\vee_R (B\cap X)=\Phi(A)\vee_R \Phi(B).
\end{align*}
\end{proof}

\begin{proposition} \label{PropAtomistic} We have the following properties.
\begin{enumerate}[(i)]
\item $\Lat^d(X)$ is atomistic. 
\item $\overline{X}=\bigcup \Atom \Lat^d(X)$.
\item The facial lattice of any polytope in $\Lat^d(X)$ is a sublattice. 
\item $\Lat^d(X)$ is finite iff $X$ is finitely completable.
\end{enumerate}
\end{proposition}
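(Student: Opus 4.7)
The plan is to replace $X$ by its completion throughout. Because $X\subseteq\overline{X}$ and every $\{x\}$ with $x\in\overline{X}$ already lies in $\Lat^d(X)$ by definition, one has $\Lat^d(X)=\Lat^d(\overline{X})$; since $\overline{X}$ is complete, Lemma~\ref{LemmaExtreme} and Proposition~\ref{PropositionIso} apply to it. The single workhorse for (i), (ii), and (iii) will be the resulting inclusion $\Extrem(A)\subseteq\overline{X}$ for every $A\in\Lat^d(X)$, combined with the remark after Definition~\ref{DefCHLat} that every such $A$ is a bounded polytope, hence equal to the convex hull of its extreme points.

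For any $A\in\Lat^d(X)$ I would then write $A=\ch(\Extrem A)=\bigvee_{e\in\Extrem A}\{e\}$; each $\{e\}$ with $e\in\overline{X}$ lies in $\Lat^d(X)$ by the definition of $\overline{X}$, and is plainly an atom since nothing sits strictly between $\emptyset$ and a one-point convex set. This gives (i). Conversely, any atom $A$ is nonempty and contains an extreme point $e\in\overline{X}$, so $\{e\}\le A$ forces $A=\{e\}$; hence $\Atom\Lat^d(X)=\{\{e\}:e\in\overline{X}\}$ and $\bigcup\Atom\Lat^d(X)=\overline{X}$, proving (ii). For (iii), each face $F$ of a polytope $P\in\Lat^d(X)$ is $\ch(S)$ for some $S\subseteq\Extrem(P)\subseteq\overline{X}$, so $F=\bigvee_{e\in S}\{e\}\in\Lat^d(X)$; intersection of faces (the face-lattice meet) coincides with $\wedge$ in $\Lat^d(X)$, giving the required embedding.

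Part (iv) then follows formally: if $\Lat^d(X)$ is finite then, by (ii), $\overline{X}=\bigcup\Atom\Lat^d(X)$ is a finite union of singletons, hence finite; conversely, if $\overline{X}$ is finite, Proposition~\ref{PropositionIso} yields $\Lat^d(X)=\Lat^d(\overline{X})\cong\RLat^d(\overline{X})\subseteq 2^{\overline{X}}$, which is finite. The one delicate point is (iii): for a non-simplicial polytope such as a square, the join computed in $\Lat^d(X)$, namely $\ch(F\cup F')$, need not be a face of $P$, so \emph{sublattice} must be read as ``the face poset embeds in $\Lat^d(X)$ as a subposet closed under meet'' rather than as a strict algebraic sublattice of $\Lat^d(X)$. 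Sorting out this interpretative subtlety, and checking closure under the relevant operations, is the step where I would be most careful when writing the proof in full.
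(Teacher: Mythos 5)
Your proof is correct, and for parts (i)--(iii) it takes a genuinely different and more self-contained route than the paper. The paper argues via the isomorphism $\Lat^d(X)=\Lat^d(\overline{X})\cong\RLat^d(\overline{X})$ of Proposition~\ref{PropositionIso} and then asserts that $\RLat^d(\overline{X})$ is ``by definition atomistic''; you instead work directly inside $\Lat^d(X)$, writing each element as $A=\ch(\Extrem A)=\bigvee_{e\in\Extrem A}\{e\}$ with $\Extrem A\subseteq\overline{X}$ finite, via Lemma~\ref{LemmaExtreme} and the observation that every element is a bounded polytope. Your version is arguably tighter: the paper's ``by definition'' claim silently needs the finite-join clause in its definition of atomistic, which for $\RLat^d(Y)$ with $Y$ infinite is not automatic from the definition alone but does follow from exactly the polytope/extreme-point argument you give. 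Your identification of the atoms (an atom is a nonempty polytope, hence contains an extreme point $e\in\overline{X}$, hence equals $\{e\}$) supplies the inclusion $\Atom\Lat^d(X)\subseteq\{\{e\}:e\in\overline{X}\}$ that the paper leaves implicit. For (iv) the two arguments coincide in substance. Finally, your caveat on (iii) is well taken and is in fact sharper than the paper, which only says ``(iii) follows from (i) and (ii)'': for a non-simplicial polytope such as the quadrilateral in $\Lat^2(D_{1,1})$, the face-lattice join of two opposite vertices is the whole polytope while their join in $\Lat^2(X)$ is the diagonal, so the set of faces is closed under $\wedge$ but not under $\vee$; the statement must indeed be read as ``every face belongs to $\Lat^d(X)$ and the face poset embeds as a meet-closed subposet'' rather than as a sublattice in the strict algebraic sense.
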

\begin{proof} $\Lat^d(X)=\Lat^d(\overline{X})\cong \RLat^d(\overline{X})$. Since $\RLat^d(\overline{X})$ is by definition atomistic, by Proposition~\ref{PropositionIso}, $\Lat^d(X)$ is, and then $\bigcup \Atom \Lat^d(X)=\bigcup \Atom \RLat^d(\overline{X})=\overline{X}$. That proves (i) and (ii). (iii) follows from (i) and (ii). For (iv), the direction $(\Rightarrow)$ follows from the fact that if $\Lat^d(X)$ is finite, then $\RLat^d(\overline{X})$ is finite and then $\overline{X}$ is finite. $(\Leftarrow)$ Since $\Lat^d(X)$ is point atomistic if $\overline{X}$ is finite, then any element in the lattice can be expressed as the convex hull of a finite number of point atoms, therefore $\Lat^d(X)=\Lat^d(\overline{X})$ must be finite. 
\end{proof}

\begin{definition} Given a morphism of configurations $f: X\longrightarrow Y$, there is at most a morphism of lattices $F: \Lat^d(X) \longrightarrow \Lat^d(Y)$, such that $F(\{x\})=\{f(x)\}$ for any $x\in X$. That is an elementary fact of universal algebra  \cite{Burris}. When such $F$ exists, we will call it the \emph{lattice extension of $f$}, or equivalently, we will say that $f$ is the \emph{configuration restriction of $F$}.
\end{definition}

\begin{theorem} \label{TheoEquiv}  Let $X$ and $Y$ be complete configurations. We have that $X\equiv Y$ iff $\Lat^d(X)\cong \Lat^d(Y)$. 
\end{theorem}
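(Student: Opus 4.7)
The plan is to route both implications through the identifications $\Lat^d(X) \cong \RLat^d(X)$ and $\Lat^d(Y) \cong \RLat^d(Y)$ given by Proposition~\ref{PropositionIso}, and to use Proposition~\ref{PropAtomistic}(ii), which under the completeness hypothesis identifies $X$ and $Y$ with the atom sets of their respective lattices. In this way, isomorphisms of configurations and isomorphisms of lattices get translated into the same algebraic data, and the task reduces to bookkeeping.

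For the direction $(\Rightarrow)$, I would start from a configuration isomorphism $f: X \longrightarrow Y$ and define $F: \Lat^d(X) \longrightarrow \Lat^d(Y)$ as the lattice extension of $f$. To show $F$ is well-defined and bijective, I would instead work with the combinatorial model: define $\tilde F: \RLat^d(X) \longrightarrow \RLat^d(Y)$ by $\tilde F(A) = f(A)$. For $A \in \RLat^d(X)$, the morphism condition $f(\Rch_X(A)) = \Rch_Y(f(A))$ ensures $\tilde F(A) \in \RLat^d(Y)$, and since $f^{-1}$ is also a configuration morphism, $\tilde F$ is a bijection. Preservation of $\wedge_R$ is clear because $f$ is injective, and preservation of $\vee_R$ is exactly the morphism condition applied to $A \cup B$. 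Composing $\tilde F$ with the isomorphisms of Proposition~\ref{PropositionIso} gives the desired $F$.

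For the direction $(\Leftarrow)$, suppose $F: \Lat^d(X) \longrightarrow \Lat^d(Y)$ is a lattice isomorphism. Any lattice isomorphism sends atoms bijectively to atoms, so by Proposition~\ref{PropAtomistic}(ii) and completeness there is a bijection $f: X \longrightarrow Y$ determined by $F(\{x\}) = \{f(x)\}$. The task is to show that $f$ is a morphism of configurations, i.e.\ $f(\Rch_X(A)) = \Rch_Y(f(A))$ for every $A \subseteq X$. The key observation is the lattice-theoretic characterization
\[
\Rch_X(A) \;=\; \bigl\{\, x \in X \,\bigm|\, \{x\} \leq \textstyle\bigvee_{a\in A}\{a\} \text{ in } \Lat^d(X)\,\bigr\},
\]
which follows from Proposition~\ref{PropositionIso} together with the fact that $\bigvee_{a\in A}\{a\} = \ch(A)$ in $\Lat^d(X)$. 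Applying the isomorphism $F$, the set of atoms of $\Lat^d(Y)$ below $F\bigl(\bigvee_{a\in A}\{a\}\bigr) = \bigvee_{a\in A}\{f(a)\}$ is exactly $\{f(x) : x \in \Rch_X(A)\} = f(\Rch_X(A))$; but by the same characterization applied in $Y$, that set is $\Rch_Y(f(A))$. Hence $f$ is an isomorphism of configurations and $X\equiv Y$.

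The main obstacle is the $(\Leftarrow)$ direction, and specifically the step of verifying that the atom-induced bijection $f$ really respects relative convex hulls rather than merely betweenness or some weaker combinatorial data. The completeness hypothesis is what makes this go through, since without it the atoms of $\Lat^d(X)$ need not exhaust $X$ and the singleton-level bijection would not be available. Everything else is formal: one checks that $\bigvee_{a\in A}\{a\}$ is preserved by $F$, and that the atoms below a lattice element transport under isomorphism, and the two characterizations of $\Rch$ match up.
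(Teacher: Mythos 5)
Your proposal is correct and follows essentially the same route as the paper: the forward direction transports the configuration isomorphism through $\RLat^d$ and the isomorphism $\Phi$ of Proposition~\ref{PropositionIso}, and the reverse direction uses the atom bijection from Proposition~\ref{PropAtomistic}(ii) together with the characterization of $\Rch$ via $\{x\}\leq\bigvee_{a\in A}\{a\}$. The only cosmetic difference is that you transport the whole set of atoms below an element at once, where the paper proves the two inclusions separately using $F$ and $F^{-1}$.
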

\begin{proof} $(\Rightarrow)$ If $f: X \longrightarrow Y$ is an isomorphism of configurations, the set extension defines an isomorphism of the relative lattices $f: \RLat^d(X) \longrightarrow \RLat^d(Y)$. Let $A,B \in \RLat^d(X)$. For the meet operation, since $f$ is injective, $f(A\wedge B)=f(A\cap B)=f(A)\cap f(B)=f(A)\wedge f(B)$. For the join operation $f(A\vee B)=f(\Rch( A \cup B))=\Rch(f(A\cup B))=\Rch(f(A) \cup f(B))=f(A) \vee f(B)$. 
If we assume that $X,Y$ are complete, then $\ch \circ f \circ \Phi$ is an isomorphism from $\Lat^d(X)$ to $\Lat^d(Y)$, where $\Phi(A)=A\cap X$, see proof of Theorem~\ref{PropositionIso}.

$(\Leftarrow)$ We prove that if $F: \Lat^d(X) \longrightarrow \Lat^d(Y)$ is an isomorphism of lattices, then the natural restriction $f:X \longrightarrow Y$ is an isomorphism of configurations.
An epimorphism of lattices is cover-preserving \cite{crapo1967structure}. Then, $F(\Atom(\Lat^d(X))) \subseteq \Atom(\Lat^d(Y))$. Since $F$ is an isomorphism, $F^{-1}(\Atom(\Lat^d(Y))) \subseteq \Atom(\Lat^d(X))$. Therefore, $$F(\Atom(\Lat^d(X)))=\Atom(\Lat^d(Y)).$$
By Proposition~\ref{PropAtomistic}(ii)  $\overline{X}= \bigcup \Atom\Lat^d(X)$ and $\overline{Y}=\bigcup \Atom\Lat^d(Y)$, which means that the natural restriction, defined by the relation $\{f(x)\}=F(\{x\})$, is a bijective mapping $f:\overline{X} \longrightarrow \overline{Y}$. For any $z \in \overline{X}$, we have that $z \in \ch(Z)$ iff $\{z\} \subseteq \bigvee_{z'\in Z} \{z'\}$, with $Z$ finite. Then,
$$F(\{z\}) \subseteq F\left(\bigvee_{z'\in Z} \{z'\}\right)=\bigvee_{z'\in Z} F(\{z'\})=\bigvee_{z'\in Z} \{f(z')\}=\bigvee_{z''\in f(Z)} \{z''\},$$
which is equivalent to $f(z) \in \ch(f(Z))$. Therefore, $f(\Rch_X(Z)) \subseteq \Rch_Y(f(Z))$.  Using the inverse $F^{-1}$ we obtain the symmetric result $f^{-1}(\Rch_Y(f(Z)) \subseteq \Rch_X(Z)$, and from there, the equality.
\end{proof}

\section {Finite ch-lattices point generated in the Euclidean plane}\label{Euclidean plane}

\begin{figure}[tb] 
\vspace{10mm}
\centering
\begin{overpic}[scale=0.130]{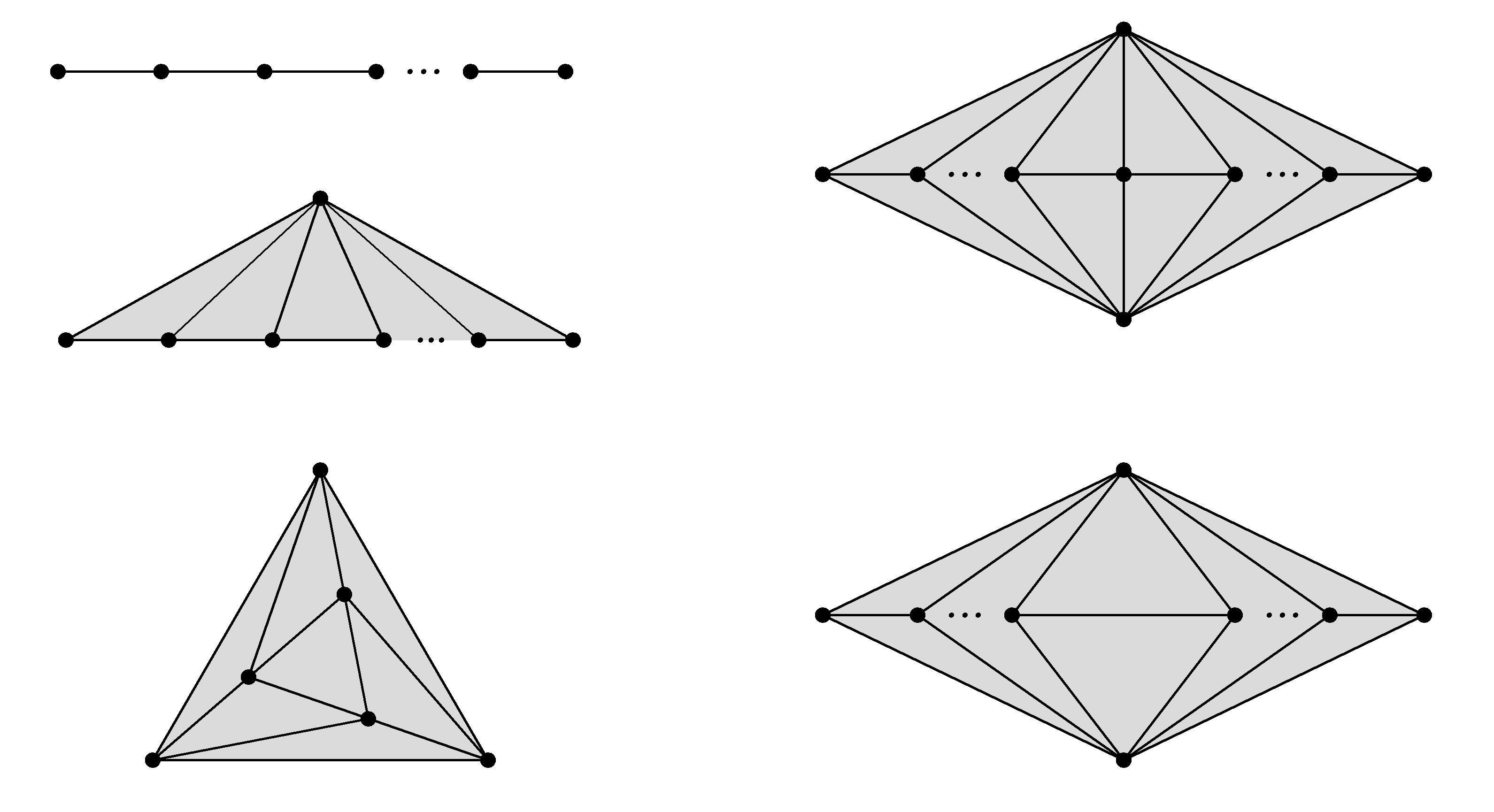}
 \put (7,53) {$L_n$} 

 \put (7,37) {$T_n$} 
 \put (22,42) {$s$}
 
 \put (10,12) {$S_6$}  
 
 \put (55,50) {$D_{p,q}$} 
 \put (76,53.5) {$s$}
 \put (76,40) {$c$}
 \put (76,30.5) {$s'$}
 \put (54,41) {$\underbrace{\qquad \qquad \quad }_{p}$}
 \put (81,41) {$\underbrace{\qquad\qquad \quad }_{q}$}
    
 \put (55,20) {$I_{p,q}$} 
 \put (76,24) {$s$}
 \put (76,0.5) {$s'$}

\end{overpic}
\caption{Configurations from Definition~\ref{DefClasses}.}\label{FigDefClasses}
\end{figure}
\begin{definition} \label{DefClasses} We define the following four families of planar configurations and an extra configuration.  
\begin{enumerate}[(i)]
\item The \emph{Linear configuration}, $L_n$, consisting in a set of $n>0$ collinear points. 
\item The \emph{Triangular configuration}, $T_n$, consisting in a set of $n>1$ collinear points $a_1{-}\cdots {-}a_n$ and an extra non-collinear point $s$, which we will call \emph{star point}. We will call the \emph{axis} the segment $a_1\vee a_n$. 
\item The \emph{Diamond configuration}, $D_{p,q}$, consisting in a set of $p+q+1$,   collinear points $a_1{-} \cdots {-} a_p {-}c{-} b_1 {-} \cdots {-} b_q$, with $(p,q) \not \in \{(0,0), $ $(1,0), (0,1)\}$, and two \emph{star points}, $s$ and $s'$, non-collinear with the above points such that $ss'\wedge r=c$, where $r= a_1b_q$ is the \emph{axis} of the configuration.  The point $c$ is called the \emph{centre}.
 \item The \emph{Subdiamond configuration}, $I_{p,q}=D_{p,q}\setminus \{c\}$, where $c$ is the center of $D_{p,q}$.
 \item The \emph{Sporadic configuration}, $S_6$, consisting in six points $a,b,c$, $a',b',c'$ such that $a{-}a'{-}c'$, $b{-}b'{-}a'$ and $c{-}c'{-}b'$, and no more collinear subsets. 
\end{enumerate}
\end{definition}

Figure~\ref{FigDefClasses} shows the configurations, the cardinalities of which are: 
$$|L_n|=n, \,\,\,\, |T_n|=n+1, \,\,\,\, |D_{p,q}|=p+q+3, \,\,\,\, |I_{p,q}|=p+q+2, \,\,\,\, |S_6|=6.$$
By simply inspecting Figure~\ref{FigDefClasses}, we see that all configurations are not equivalent.  Notice also that when $p=0$ or $q=0$, $I_{p,q}$ is complete. When $p,q>0$, it is incomplete. 
These configurations suffice to characterise all the finite planar ch-lattices. To streamline our analysis, we will exploit the symmetries of the configurations to reduce the number of cases in the proofs of the next propositions. In particular, $\Aut(D_{1,1})\cong \Aut(I_{1,1})\cong \mathcal{D}_4$,  where $\mathcal{D}_4$ is the dihedral group of order $4$, $\Aut(D_{p,q})\cong \Aut(I_{p,q})\cong \mathbb{Z}/(2)$, for $p,q\not=1$,  and $\Aut(S_6)\cong \mathbb{Z}/(3)$.

\begin{example} \label{ExampleLattices} Let us characterise some of the ch-lattices of the above configurations. Let $\Sigma^*$ be the free monoid of words on the letters $\Sigma$. A word $x$ is said to be a \emph{subword} of $y$ (or that $x$ is a \emph{factor word} of $y$) if there are words $z,t$ such that $y=zxt$. Let $W_n$ be the set of subwords of the word $a_1\cdots a_n$, where $a_1, \ldots, a_n$ are all different letters of $\Sigma$. The order ``be a subword'' on $W_n$ defines a lattice denoted by $\mathcal{W}_n$. Then, it is easy to see that $\Lat^2(L_n) \cong \mathcal{W}_n$. 
In addition, let $\mathcal{C}_2$ be the chain lattice of length two. Then, $\Lat^2(T_n)\cong \mathcal{C}_2 \times \mathcal{W}_n$. 

Figure~\ref{Fig3} shows the Hasse diagrams for the lattices $\Lat^2(L_5)$, $\Lat^2(T_5)$ and $\Lat^2(D_{2,3})\cong \Lat^2(I_{2,3})$. One can infer from the examples the general cases.
\end{example}

\begin{figure}[h] 
\centering
\vspace{7mm}
\begin{overpic}[scale=0.160]{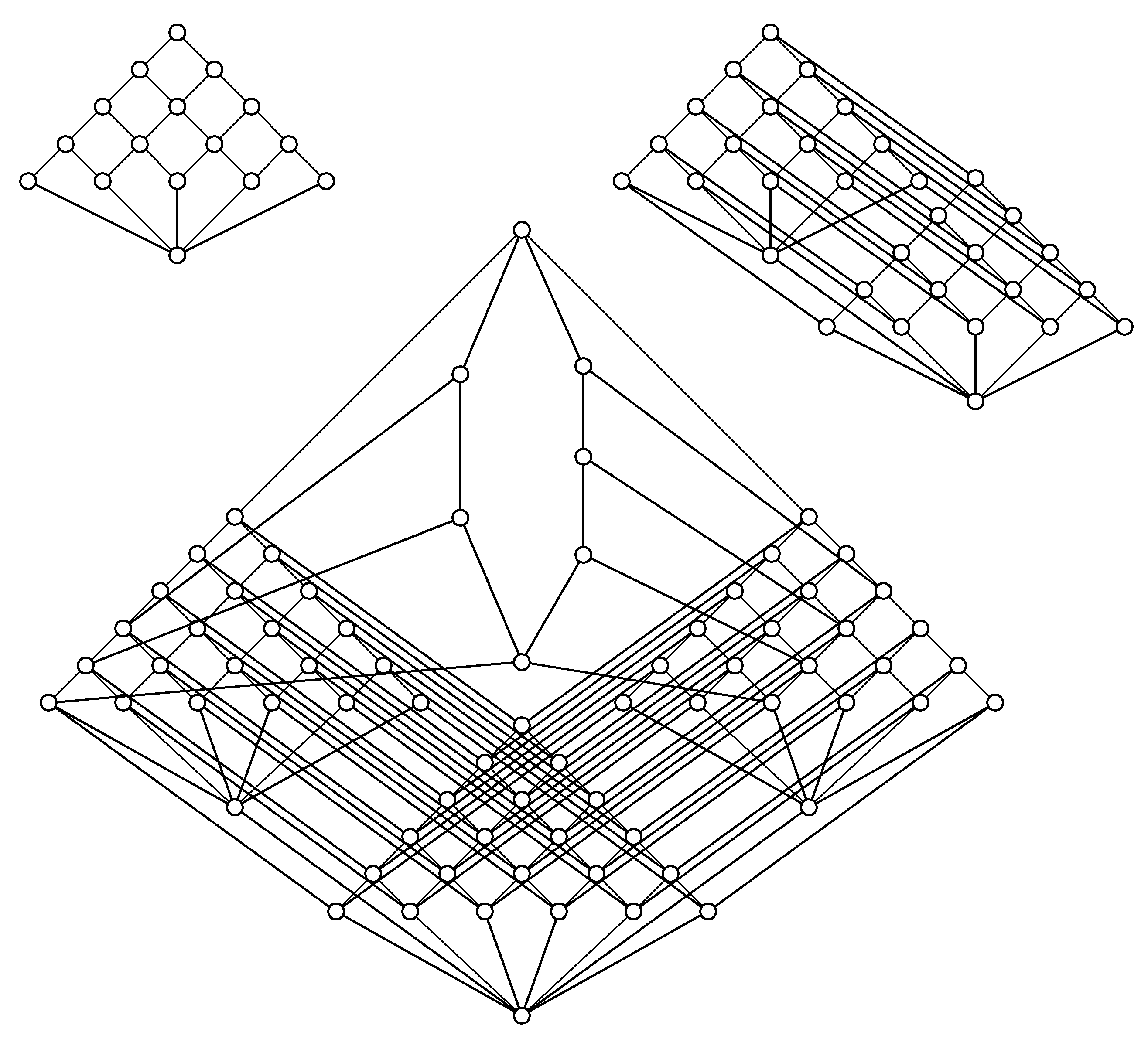}
 \put (8,63) {$\Lat^2(L_5)$} 
 \put (78,50) {$\Lat^2(T_5)$} 
 \put (6,12) {$\Lat^2(D_{2,3})$} 
\end{overpic}
\caption{Hasse diagrams for some planar finite ch-lattices point generated.}\label{Fig3}
\end{figure}

\begin{lemma} \label{LemmaTheo1}  $\Lat^2(V_5)$ is infinite. 
\end{lemma}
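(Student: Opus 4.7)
The plan is to show that the completion $\overline{V_5}$ is infinite; by Proposition~\ref{PropAtomistic}(iv), this immediately yields that $\Lat^2(V_5)$ is infinite. The strategy is the classical ``pentagon-inside-pentagon'' argument adapted to $V_5$: draw the natural chords between pairs of generators, observe that their intersections yield a strictly smaller inner copy of $V_5$, and iterate indefinitely.

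First I would fix a representative of $V_5$ in convenient Cartesian coordinates --- any representative suffices by Theorem~\ref{TheoEquiv} --- and explicitly compute the set $N \subseteq \overline{V_5}$ of new points arising as pairwise intersections of segments between original generators. The central claim is that $V_5 \cup N$ contains a $5$-element subset $W_1$ equivalent to $V_5$ as an abstract configuration and properly contained in a strictly smaller sub-region of $\ch(V_5)$. Given such an inner copy, the same construction applied to $W_1$ produces a further inner copy $W_2$ with $W_2 \subsetneq W_1 \subsetneq V_5$, and induction yields an infinite nested sequence $V_5 \supsetneq W_1 \supsetneq W_2 \supsetneq \cdots$; strict nesting guarantees that the points produced at each level are pairwise distinct, giving infinitely many points in $\overline{V_5}$.

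The hard part will be the verification of self-similarity at the first step: correctly identifying the $5$-element subset of $V_5 \cup N$ that realises the inner copy $W_1$, and confirming the equivalence $W_1 \equiv V_5$ by matching the invariants $\Rext$ and $\#_Z$ introduced in Section~2 across the relevant small configuration classes $Z$. A projective reformulation can streamline this: parameterise a line on which the new points accumulate, describe the one-step map $W_n \mapsto W_{n+1}$ as a M\"obius transformation on that line, and verify that it has a strictly contracting fixed point with multiplier in $(0,1)$. Monotonicity of the resulting orbit then replaces the combinatorial bookkeeping, and pairwise distinctness of the new points follows at once. Once the first inner copy is exhibited and its equivalence to $V_5$ checked, the rest of the argument reduces to a routine induction on the nesting level.
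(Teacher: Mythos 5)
Your strategy --- produce an inner copy of $V_5$ from intersection points of chords and iterate --- is exactly the one the paper uses: it defines a transformation $T(a,b,c,d,e)=(a,b,c,d',e')$ with $e'=ad\wedge be$ and $d'=e'c\wedge bd$, checks that the image is again a $V_5$ configuration, and extracts the infinite betweenness chain $e{-}e'{-}e''{-}\cdots{-}b$. But as written your proposal contains no proof: the one step that carries all of the content (which five points of $V_5\cup N$ realise the inner copy $W_1$, and why $W_1\equiv V_5$) is explicitly deferred as ``the hard part,'' as is the contraction estimate. Until that transformation is written down and the defining collinearities and non-collinearities of $V_5$ are verified for its image, nothing has been established.

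Two further points would need repair even once the construction is supplied. First, the reduction to a single coordinate representative via Theorem~\ref{TheoEquiv} is not available: that theorem is stated only for \emph{complete} configurations, and $V_5$ is not complete; the paper avoids this by arguing for an arbitrary representative $\{a,b,c,d,e\}$ using only betweenness and segment intersections, which handles every representative at once. Second, the chain $V_5\supsetneq W_1\supsetneq W_2\supsetneq\cdots$ cannot hold as strict containment of five-element sets (they all have the same cardinality); you must mean strict nesting of the convex hulls, in which case infinitude does follow by pigeonhole (finitely many points admit only finitely many five-point subsets, hence only finitely many distinct hulls), but this needs to be said. The paper's device is simpler and makes the M\"obius/fixed-point machinery unnecessary: each iteration produces a point strictly between the previous new point and $b$ on a fixed segment, so the new points are pairwise distinct by monotonicity of betweenness alone.
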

\begin{proof} Let $V_5=\{a,b,c,d,e\}$ be a set of points in the plane such that $a{-}b{-}c$ and $a{-}d{-}e$, but  $a,b,c,d,e$ are not collinear. Define the transformation $T(a,b,c,d,e)=(a,b,c,d',e')$ where $e'=ad \wedge be$, and $d'=e'c\wedge bd$, see Figure~\ref{FigLemmas}(a). First, we see that it is well-defined. Since $a,b,d$ are not collinear and $e\in \Theta(a,b,d)$, $ad\wedge eb\not=\emptyset$. Similarly, since $b,c,d$ are not collinear and $e'\in \Theta(b,c,d)$, $e'c\wedge bd\not=\emptyset$. 
$a,b,c,d',e'$ is also a $V_5$ configuration. Clearly $a{-}b{-}c$ and $c{-}d'{-}e'$, by definition. If $a,b,c,d',e'$ were collinear, $a,b,c,d,e$ would be collinear. 
Finally notice that, $e{-}e'{-}b$, with $e'\not=e$ and $e'\not= b$. Applying the transformation as many times as we want, we get
$e{-}e'{-}e''{-}e'''{-}\cdots {-}b$, which means that $\Lat^2(V_5)$ is infinite.  
\end{proof}

\begin{definition} \label{DefClasses2} We say that a configuration in the plane is in \emph{B position} if it has five points $a,b,c,d,e$ such that $a{-}b{-}c$, $e \in H^+(ac,d)$ and that $d,e,b$ are not collinear. 
\end{definition}

\begin{remark} A planar five-point configuration is in B position iff it is equivalent to $V_5$, $R$ or $R'$. 
\end{remark}

\begin{lemma} \label{LemmaTheo2} If $X$ is in B position, then $\Lat^2(X)$ is infinite. 
\end{lemma}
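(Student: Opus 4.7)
The plan is to reduce to Lemma~\ref{LemmaTheo1}. Let $Y=\{a,b,c,d,e\}\subseteq X$ be a quintuple satisfying Definition~\ref{DefClasses2}. By the preceding remark, $Y$, viewed as an abstract configuration, is equivalent to exactly one of $V_5$, $R$ or $R'$, so three cases remain. When $Y\equiv V_5$, Lemma~\ref{LemmaTheo1} applied to the sub-configuration $Y$ produces an infinite lattice $\Lat^2(Y)$; since $Y\subseteq X$ forces $\Lat^2(Y)\subseteq\Lat^2(X)$, this case is immediate.

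For $Y\equiv R$ or $Y\equiv R'$, the plan is to adjoin one new point $p\in\overline{X}$, obtained as the intersection of two segments with endpoints in $Y$, so that some five of the six points $\{a,b,c,d,e,p\}$ form a $V_5$ inside $\overline{X}$. The candidates for $p$ are the intersections $ad\wedge be$, $ae\wedge bd$, $cd\wedge be$ and $ce\wedge bd$. The B position hypotheses --- that $d$ and $e$ lie in a common open half-plane of $\overline{ac}$ and that $b,d,e$ are not collinear --- are exactly what guarantee that one of these intersections is a single point lying in the relative interior of both operand segments, and hence a genuinely new element of $\overline{X}$. A correct choice of $p$ produces a second collinear triple of the form $a{-}p{-}e$ or $c{-}p{-}d$ (or a symmetric variant) alongside $a{-}b{-}c$, exhibiting a $V_5$ on five of the six points in $\overline{X}$. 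The first case applied to this $V_5$ inside $\overline{X}$, together with $\Lat^2(X)=\Lat^2(\overline{X})$, then forces $\Lat^2(X)$ to be infinite.

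The principal obstacle is this geometric construction in the $R$ and $R'$ cases: for each abstract shape one must pick the right pair of segments, verify that their intersection is a single interior point, and then check that the resulting five-tuple really is $V_5$ and not some other five-point configuration --- in particular, one must rule out the degenerate situation where $p$ happens to be collinear with three of the original points, which would collapse the shape to $T_4$ and give no new collinearity beyond $a{-}b{-}c$. This is a bounded case analysis driven by the half-plane condition $e\in H^+(\overline{ac},d)$, which restricts the location of $e$ relative to the triangles $\triangle abd$ and $\triangle bcd$ and thereby determines which of the four candidate intersections is the one to use.
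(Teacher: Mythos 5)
Your proposal is correct and follows essentially the same route as the paper: the paper likewise reduces to Lemma~\ref{LemmaTheo1} by intersecting two segments spanned by $\{a,b,c,d,e\}$ to manufacture a single new point completing a $V_5$ inside $\overline{X}$, differing only in which pairs of segments it tries ($ad\wedge ec$ and its degenerate variants, rather than your four diagonals through $b$). The existence claim you defer --- that one of your four candidate crossings is always a transversal interior intersection --- does hold, since at least one of the quadruples $\{a,b,d,e\}$ and $\{c,b,d,e\}$ must be in convex position with the two points of the axis adjacent on its hull, so your sketch fills in to a complete proof.
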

\begin{proof} Let $X=\{a,b,c,d,e\}$ be in B position. We show that we can always find a $V_5$ configuration in $\Lat^2(X)$, which by Lemma~\ref{LemmaTheo1}, implies directly that $\Lat^2(X)$ is infinite since we have a monomorphism of lattices $\Lat^2(V_5) \longrightarrow \Lat^2(X)$. Assume that $ad\wedge ec=\emptyset$, otherwise we are done because $\{a,b,c,ad\wedge ec,d\}\equiv V_5$, see Figure~\ref{FigLemmas}(b). 
That means that either $e \in H^-(dc,a)\cap H^-(ad,c)$ or $d \in H^-(ec,a)\cap H^-(ae,c)$. Thus, $e \in H^-(dc,a)\cap H^-(ad,c)$, and notice that by hypothesis $e,d,b$ cannot be collinear. Then either $ec\wedge ad\not=\emptyset, d$, and then $\{a,b,c, ec\wedge ad, d\}\equiv V_5$, as in Figure~\ref{FigLemmas}(c), or $ec\wedge dc\not=\emptyset, d$, and then $\{a,b,c,ec\wedge dc,d\}\equiv V_5$. The second case $d \in H^-(ec,a)\cap H^-(ae,c)$ runs symmetrically. 
\end{proof}
\begin{figure}[tb] 
\centering
\vspace{7mm}
\begin{overpic}[scale=0.13]{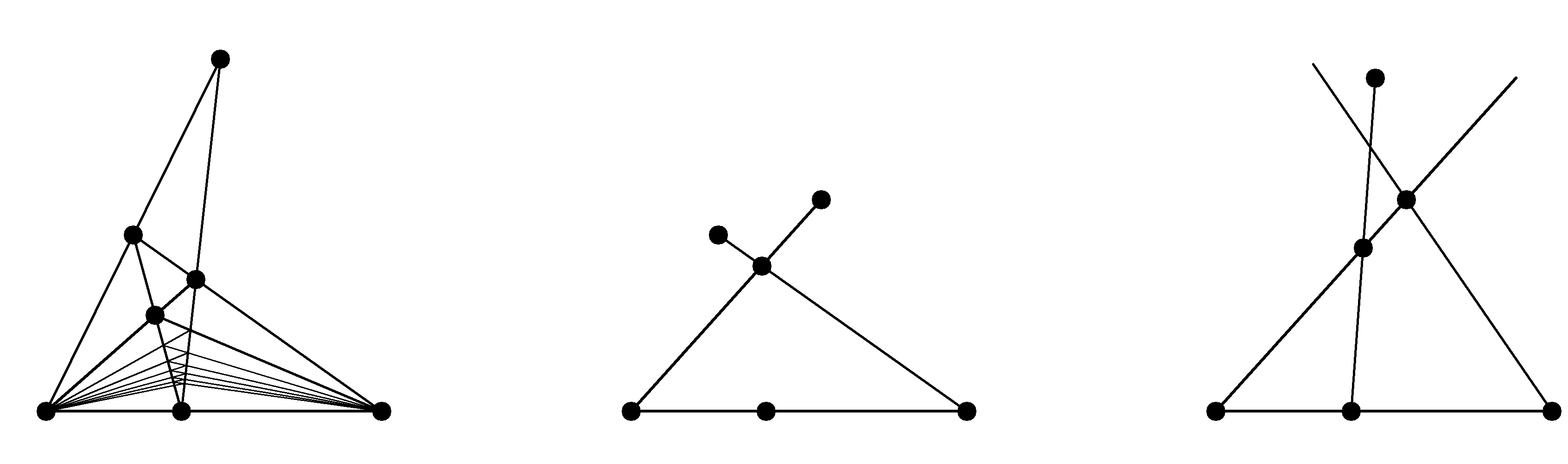}

 \put (0,24) {(a)} 

 \put (1,0) {$a$} 
 \put (11.5,0) {$b$} 
 \put (23,0) {$c$} 
 \put (7,16) {$e$} 
 \put (11,25) {$d$}  
 \put (6.5,8.5) {$e'$} 
  \put (13.5,12) {$d'$} 
 
 \put (38,24) {(b)} 
   
 \put (38.5,0) {$a$} 
 \put (49,0) {$b$} 
 \put (61,0) {$c$} 
  \put (45,16) {$d$} 
  \put (52,18) {$e$} 
   
 \put (75,24) {(c)} 
   
 \put (76,0) {$a$} 
 \put (86,0) {$b$} 
 \put (98,0) {$c$} 
 
   \put (89,24) {$e$} 
  \put (91,16) {$d$}
\end{overpic}
\caption{Schemes for Lemmas~\ref{LemmaTheo1} and \ref{LemmaTheo2}.} \label{FigLemmas}
\end{figure}
\begin{lemma} \label{LemmaTheo3}  If $X$ is a configuration of five points in the plane in general position, then  $\Lat^2(X)$ is infinite. 
\end{lemma}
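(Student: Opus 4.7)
My approach is to produce, in every case, a five-point B-position subconfiguration inside $\overline{X}$; Lemma~\ref{LemmaTheo2} then forces $\Lat^2(X)$ to be infinite. I would organise the argument by the number of vertices of the convex hull of $X$, which (by general position) is $3$, $4$, or $5$.

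If $\ch(X)$ has five vertices, label $X = \{a_1, \ldots, a_5\}$ in convex cyclic order and set $q = a_1a_3 \wedge a_2a_4$; these two diagonals have interleaving endpoints in the convex pentagon, so they cross and $q \in \overline{X}$. The quintuple $\{a_1, q, a_3, a_4, a_5\}$ has collinear triple $a_1{-}q{-}a_3$; the points $a_4, a_5$ lie on the side of line $a_1a_3$ opposite to $a_2$; and $a_4, a_5, q$ are not collinear, for $q \in a_2a_4$ and $a_2, a_4, a_5$ being collinear would contradict general position.

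If $\ch(X)$ has four vertices $p_1, p_2, p_3, p_4$ (cyclically ordered) with $p_5$ in the interior, again set $q = p_1p_3 \wedge p_2p_4$. The point $p_5$ sits in exactly one of the four open sub-triangles $\triangle p_ip_{i+1}q$; after relabelling, assume $p_5 \in \triangle p_1p_2q$. The same recipe gives a B-position quintuple $\{p_1, q, p_3, p_2, p_5\}$: $p_1{-}q{-}p_3$ is collinear, $p_2, p_5$ lie on the $p_2$-side of line $p_1p_3$, and $p_2, p_5, q$ are not collinear (else $p_5 \in p_2p_4$, forcing $p_2, p_4, p_5$ collinear).

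If $\ch(X)$ is a triangle, label $X = \{a, b, c, d, e\}$ with $a, b, c$ the hull and $d, e$ interior. The line through $d$ and $e$ meets exactly two sides of $\triangle abc$ (general position rules out passing through a vertex), so one hull vertex is separated from the other two; WLOG $a$ is alone and $b, c$ lie together. Since $b, c$ lie strictly on one side of line $de$ while $d, e$ lie on it, the four points $\{b, c, d, e\}$ are in convex position, with cyclic order $b, c, e, d$ (after swapping $d, e$ if necessary); their diagonals $be$ and $cd$ meet at some $q \in \overline{X}$. I then claim that $\{b, q, e, a, d\}$ is in B-position: $b{-}q{-}e$ is collinear; $a$ and $d$ lie on the side of line $be$ opposite to $c$; and $a, d, q$ are not collinear because $q \in cd$, so collinearity would put $a$ on $cd$ and contradict general position.

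The main obstacle is the assertion in the last case that $a, d$ always lie on the same side of line $be$. I would handle this in a coordinate frame with $b = (-1, 0)$, $c = (1, 0)$, $a = (0, h)$, and $d = (d_x, k), e = (e_x, k)$ with $d_x < e_x$ and $0 < k < h$: the two relevant $2\times2$ determinants evaluate to $h(e_x+1) - k$ and $k(e_x - d_x)$, both of which are strictly positive. The second is immediate from $e_x > d_x$; the first uses the fact that $e$ lies strictly inside $\triangle abc$, which forces $e_x + 1 > k/h$.
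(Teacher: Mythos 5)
Your overall strategy---manufacture a five-point B-position subconfiguration from a diagonal crossing and invoke Lemma~\ref{LemmaTheo2}---is the same as the paper's, and your pentagon and quadrilateral cases are correct. The paper collapses your three-way case split by quoting the Erd\H{o}s--Szekeres fact that five points in general position always contain four in convex position, say $abcd$; it sets $x=ac\wedge bd$ and observes that the fifth point $e$, which is off the line $ac$ by general position, lies either in $H^+(ac,d)$ or in $H^+(ac,b)$, yielding the B-position quintuple $\{a,x,c,d,e\}$ or $\{a,x,c,b,e\}$ respectively. The point of that dichotomy is that one never needs to decide \emph{which} side the leftover point is on.

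The genuine gap is in your triangle case. The coordinate frame $b=(-1,0)$, $c=(1,0)$, $a=(0,h)$, $d=(d_x,k)$, $e=(e_x,k)$ is not attainable without loss of generality: once $b$ and $c$ are pinned to $(\mp1,0)$, the residual affine freedom consists of the maps $(x,y)\mapsto(x+\alpha y,\beta y)$, and these preserve the condition ``$d$ and $e$ have equal second coordinate.'' Hence your frame exists only when the line $de$ is parallel to the line $bc$, an affinely invariant, non-generic condition, and your determinant computation verifies the side claim only in that special case. The claim itself is true and has a one-line synthetic proof: the line $be$ passes through the vertex $b$ and the interior point $e$, so it meets the relative interior of the opposite edge $ac$ and therefore separates $a$ from $c$; since the diagonal $be$ of the convex quadrilateral $bced$ also separates $c$ from $d$, both $a$ and $d$ lie on the side of $be$ opposite to $c$. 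Better still, you can sidestep the question entirely in the paper's style: $a$ is off the line $be$ by general position, so either it is on the $d$-side and $\{b,q,e,d,a\}$ is in B position as you argue, or it is on the $c$-side and $\{b,q,e,c,a\}$ is in B position (here $c,a,q$ collinear is impossible because $q\in cd$ would force $a,c,d$ collinear). Either way Lemma~\ref{LemmaTheo2} applies, and the coordinate computation can be deleted.
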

\begin{proof} Let $X=\{a, b,c,d,e\}$. A classical result asserts that any set of five points in the plane in general position has a subset of four points that form the vertices of a convex quadrilateral, \cite[p. 27]{matouvsek2003introduction}, \cite{erdos1935combinatorial}.
 Let the vertices $a,b,c,d$ be in anticlockwise order of the convex quadrilateral in $X$. The diagonals necessarily intersect, $ac \wedge bd=x \not= \emptyset$. Since points in $P$ are in general position $e,a,c$ are not collinear and $x \not \in ac$. If $e \in H^+(ac, d)$ or $e \in H^-(ac,d)$, then $a,x,c,b,e$ are in B position and $\Lat^2(X)$ is infinite.  \end{proof}

\begin{remark} \label{RemSubcon} Notice that all the configurations from Definition~\ref{DefClasses} are subconfigurations of the diamond configuration, except for the sporadic configuration:
\begin{align*}
&L_{p+q+1} \equiv D_{p,q} \setminus \{s,s'\}, \quad T_{p+q+1} \equiv D_{p,q}\setminus \{s'\}, \quad I_{p,q} \equiv D_{p,q} \setminus \{c\}.
\end{align*}
Any subconfiguration of the diamond configuration is equivalent to only one of the configurations $L_n, T_n, I_{p,q}, D_{p,q}$.
\end{remark}

\begin{theorem} \label{MainTheo} Let $X$ a subset of the plane. The lattice  $\Lat^2(X)$ is finite iff $X$ is a subconfiguration of $D_{p,q}$ for some integers $p,q$, or $X$ is equivalent to $S_6$. 
\end{theorem}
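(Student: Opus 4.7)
The plan is to prove both directions of the iff. The backward direction is a direct completeness check, and the forward direction proceeds by a structural case analysis driven by the three lemmas.

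For the backward direction, by Proposition~\ref{PropAtomistic}(iv) it suffices to show that each listed configuration is finitely completable. By Remark~\ref{RemSubcon}, a subconfiguration of some $D_{p,q}$ is equivalent to $L_n$, $T_n$, $I_{p,q}$, or $D_{p,q}$. I would verify by direct inspection that $L_n$, $T_n$, and $D_{p,q}$ are complete---the only non-trivial identification in $D_{p,q}$ is $ss'\wedge r=c$, which already belongs to $D_{p,q}$ by construction---and that $\overline{I_{p,q}}=D_{p,q}$. For $S_6$, I would exhaustively check that every intersection of segments and triangles among the six points either coincides with a point of $S_6$ or lies outside the relevant convex hulls; the three cyclically arranged collinear triples make this a finite but straightforward verification.

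For the forward direction, suppose $\Lat^2(X)$ is finite. Then $\overline{X}$ is finite by Proposition~\ref{PropAtomistic}(iv); since the conclusion is unaffected by replacing $X$ with $\overline{X}$ (the completion of a subconfiguration of $D_{p,q}$ is again one, and $S_6$ is complete), I may assume $X$ is complete and finite. For $|X|\leq 4$, the eight equivalence classes in the catalogue of Figure~\ref{Fig0} are all subconfigurations of some $D_{p,q}$. For $|X|\geq 5$, every 5-subset $Y\subseteq X$ satisfies $\Lat^2(Y)\subseteq\Lat^2(X)$ and so has finite lattice. Lemmas~\ref{LemmaTheo1}, \ref{LemmaTheo2}, and \ref{LemmaTheo3} eliminate $V_5$, $R$, $R'$, and $P_5$ as possible 5-subsets; I would rule out the remaining 5-point configurations $G$ and $G'$ by exhibiting embedded $V_5$-configurations in their completions. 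This leaves only $L_5$, $T_4$, $D_{0,2}$, $D_{1,1}$, $I_{0,3}$, $I_{1,2}$ as admissible 5-subsets of $X$.

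The final step is the structural analysis. Taking a maximal collinear subset $L=\{a_1,\ldots,a_k\}\subseteq X$ with $k\geq 3$ (which exists since $P_5$ is excluded), the admissibility of all 5-subsets containing three points of $L$ forces the off-line points to be highly constrained: at most two off-line points, either both on one side of the axis (giving $T_n$) or on opposite sides with the segment between them crossing the axis at a point of $L$ (giving $I_{p,q}$ or $D_{p,q}$), unless $X$ consists of exactly six points arranged in a 3-fold cyclic pattern of three collinear triples, namely $S_6$. The main obstacle will be this structural step: verifying that no other sporadic configuration arises, and in particular that $S_6$ cannot be properly extended while remaining finitely completable, by showing that any seventh point forces a 5-subset in B position or in general position.
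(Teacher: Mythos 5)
Your backward direction and your handling of the base cases match the paper's: the paper likewise treats sufficiency as a routine completeness check and eliminates $V_5$, $R$, $R'$, $G$, $G'$, $P_5$ via Lemmas~\ref{LemmaTheo1}--\ref{LemmaTheo3} (note that $G$ and $G'$ are already in general position, so Lemma~\ref{LemmaTheo3} disposes of them and no separate hunt for embedded $V_5$'s is needed). The genuine gap is in your final ``structural analysis,'' which is where essentially all the content of the theorem lives. First, the dichotomy you state is wrong as written: two off-line points on the same side of a maximal collinear triple \emph{do} occur in finite configurations --- in $D_{0,2}=\{c,b_1,b_2,s,s'\}$ the points $b_1,b_2$ lie on the same side of the collinear triple $s{-}c{-}s'$, and in $S_6$ the points $b,b'$ lie on the same side of the line through $a{-}a'{-}c'$ --- while $T_n$ has only \emph{one} off-axis point, so ``both on one side of the axis (giving $T_n$)'' cannot be the right conclusion. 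Second, and more fundamentally, knowing that every $5$-subset of $X$ is one of the six admissible types is a local condition, and passing from it to the global conclusion that $X$ is a subconfiguration of some $D_{p,q}$ or is equivalent to $S_6$ is a nontrivial reconstruction step that you do not supply; you yourself flag it as the main obstacle.

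The paper sidesteps this local-to-global difficulty by inducting on $|X|$: assuming $X$ is a subconfiguration of $D_{p,q}$ or equivalent to $S_6$, it asks where one additional point $x$ can be placed. If $x$ is off the axis of $D_{p,q}$ and not suitably collinear with a star point, a $5$-subset in B position appears and Lemma~\ref{LemmaTheo2} applies; the only survivors are $x$ on the axis (giving $D_{p+1,q}$ or $D_{p,q+1}$) and, in the single low-order case $Y\equiv D_{0,2}$ with $\overline{xsa_2}$ and $x\notin\triangle a_1sa_3$, the sporadic $S_6$. A separate case shows $S_6$ admits no extension at all, using that three half-planes cover $\mathbb{R}^2$ together with the $\mathbb{Z}/(3)$ symmetry. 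If you want to keep your global route (which is closer in spirit to the paper's Theorem~\ref{TheoSecondCharac}, itself only sketched there), you must actually carry out the case analysis of how a maximal collinear set of size at least $3$ can be augmented without creating a $5$-subset in B position or in general position; until then the proof is incomplete.
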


\begin{proof} We only need to prove the necessary condition for which we proceed by induction on the size of $X$, $|X|=n$. Let $n\leq 5$. Then, the statement can be manually checked as follows. Consider Figure~\ref{Fig0}. By Lemma~\ref{LemmaTheo1}, $V_5$ is not finitely completable. $R$ and $R'$ are in B position, and $G, G'$ and $P_5$ are in general position. By Lemmas~\ref{LemmaTheo2} and \ref{LemmaTheo3}, they are not finitely completable. Therefore, the unique configurations finitely completable are $L_1$, $L_2$, $L_3$, $L_4$, $L_5$, $T_2$, $T_3$, $T_4$, $D_{0,2}$, $D_{1,1}$, $I_{0,2}$, $I_{0,3}$, $I_{1,1}$, and $I_{1,2}$. By Remark~\ref{RemSubcon}, all of them are subconfigurations of the diamond configuration.

Assume that the statement is true for any set $X$ with $|X|=n\geq 5$, and let $X'=X \cup \{x\}$ where $x \not \in X$. We distinguish the two cases of the main statement. 
\begin{enumerate}
\item Suppose $X \subseteq D_{p,q}$, with $p+q+3=n$. If we add a point, then $X \cup \{x\} \subseteq D_{p,q} \cup \{x\}$.  If $x \in D_{p,q}$, we are done. Therefore, we can suppose that $x\not \in D_{p,q}$. In this case, if we prove that $D_{p,q} \cup \{x\} \subseteq D_{p',q'}$ or that  $D_{p,q} \cup \{x\} \equiv S_6$, we are done. Notice that in the first case, $X' \subseteq D_{p',q'}$ and $X' \subseteq S_6$ in the second case. However, all the proper configurations of $S_6$ are subconfigurations of $D_{p,q}$. Therefore, $X'$ is a subconfiguration of a diamond configuration or equivalent to the sporadic configuration.  

For brevity we will write $Y=D_{p,q}$ and $Y'=D_{p,q} \cup \{x\}$.  
Let $r$ be the axis and $s,s'$ the star points. We need to distinguish between $n=5$ and $n>5$.
Consider first $n>5$. 
Suppose $x \in H^+(r,s)$. If $x$ and $s$ are not collinear with any point in the axis $r\cap X$, then we have a configuration in B position. By Lemma~\ref{LemmaTheo2}, the ch-lattice is infinite, which is impossible by hypothesis. Suppose that there is some point $a$ in the axis such that $\overline{xsa}$. Since $n> 5$, the axis is conformed at least by four points. We can ignore the point $a$, and then the axis has at least three points and now $s$ and $x$ are not collinear with any point in the axis. Therefore, we have again a configuration in B position. If we suppose that  
$y \in H^-(r,s)$, then by symmetry, it is also impossible. If we want a finite lattice, $y$ must be collinear with $r$. Then, $Y'\equiv D_{p+1,q}$ or $D_{p,q+1}$.

Now let $n=5$. Then, $Y\equiv D_{1,1}$ or $Y\equiv D_{0,2}$. If $Y\equiv D_{1,1}$, $Y$ has symmetry $\Aut(Y)\cong \mathcal{D}_4$ and we can consider that it has two axis. If $x$ is not collinear with any axis, then we have a configuration in B position. So $x$ must be collinear with some axis and then $Y'\equiv D_{1,2}$. 

Consider that $Y \equiv D_{0,2}$, and let $a_1{-}a_2{-}a_3$ the axis. As in the above cases, if $x,s, a_2$ are not collinear, we will obtain a configuration in B position. Thus, $\overline{ysa_2}$.  First, we notice that $y\not \in \triangle a_1 s a_3$ because we could intersect the axis as $x'=sx \wedge r \not=\emptyset$, with which we would obtain a $V_5$ configuration $\{s,x,a_2,a_1,x'\}$. That means that $x$ cannot be between $s$ and $a_2$. And now, we get the sporadic configuration $Y'\equiv S_6$ given by $a_1{-}a_2{-}a_3$, $s{-}a_1{-}s'$, and $x{-}s{-}a_2$. 

\item $X \equiv S_6$. In this case, we will show that we cannot add an extra point $x$ to get a finitely completable configuration. Due to the ternary symmetry of $S_6$, $\Aut(S_6)\cong \mathbb{Z}/(3)$, we only need to check ``one-third'' of the total cases. Let $a,b,c, a',b',c'$ be the vertices of $S_6$, with the collinear sets according to the Definition~\ref{DefClasses}. Notice that $H^+(ac',c) \cup H^+(cb',b) \cup H^+(ba',a)=  \mathbb{R}^2$,
and we only need to check that $\Lat^2(Y)$ is infinite if $x \in H^+(ac',c)$, and the rest of half planes are similar. If $a', c, x$ are not collinear, then we have a configuration in B position. But if $\overline{a'cx}$, then $x'=xc'\wedge cb\not=\emptyset$, and then $x', a,a',c',c$ is also a configuration in B position.   
\end{enumerate}
\end{proof}

\begin{corollary} \label{CoroFirst} Let $X$ a subset of the plane. The lattice $\Lat^2(X)$ is finite iff $X$ is equivalent to $L_n, T_n, D_{p,q}, I_{p,q}$ or $S_6$ for some integers $n,p,q$.
\end{corollary}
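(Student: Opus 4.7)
The plan is to derive this corollary from Theorem~\ref{MainTheo} together with Remark~\ref{RemSubcon}, since these two together already encode nearly all the content needed. The theorem classifies finite ch-lattices as those coming from subconfigurations of some diamond or from $S_6$, and the remark explicitly identifies which abstract configurations arise as subconfigurations of $D_{p,q}$.

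For the necessary direction, I would start from a set $X \subseteq \mathbb{R}^2$ with $\Lat^2(X)$ finite. By Theorem~\ref{MainTheo}, either $X \equiv S_6$ (in which case we are done), or $X$ is a subconfiguration of some $D_{p,q}$. In the latter case, Remark~\ref{RemSubcon} asserts that every subconfiguration of a diamond is equivalent to exactly one of $L_n$, $T_n$, $I_{p,q}$, or $D_{p,q}$, so $X$ falls into one of the listed families.

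For the sufficient direction, I must check that each of the five types indeed has a finite ch-lattice. Using the explicit identifications from Remark~\ref{RemSubcon}, namely $L_{p+q+1}\equiv D_{p,q}\setminus\{s,s'\}$, $T_{p+q+1}\equiv D_{p,q}\setminus\{s'\}$, and $I_{p,q}\equiv D_{p,q}\setminus\{c\}$, each of $L_n$, $T_n$, $I_{p,q}$ embeds as a subconfiguration of a suitable diamond, and $D_{p,q}$ itself is trivially such a subconfiguration; hence by Theorem~\ref{MainTheo} their ch-lattices are finite. The case $X\equiv S_6$ is handled directly by the theorem.

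The only mildly delicate point, and the one that I would treat as the main (but very light) obstacle, is verifying that Remark~\ref{RemSubcon} really gives an exhaustive list of subconfigurations of $D_{p,q}$ up to equivalence. This reduces to a case analysis on which of the star points $s,s'$ and which of the axis points (including the centre $c$) are retained when forming a subset: if both star points are removed, the result is collinear, hence a linear configuration $L_n$; if exactly one star point survives, the axis points together with the remaining star form a $T_n$; if both stars survive and the centre is removed, the definition of $I_{p,q}$ is met; and if all points of $D_{p,q}$ are kept, one still has a diamond. No other equivalence classes can appear, which matches the remark.
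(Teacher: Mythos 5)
Your proposal is correct and follows essentially the same route as the paper: both directions are read off from Theorem~\ref{MainTheo} together with the classification of subconfigurations of $D_{p,q}$ in Remark~\ref{RemSubcon}, with the converse being immediate since all listed configurations are finitely completable. Your added case analysis justifying the remark is a welcome extra (though it glosses over degenerate cases such as retaining one star point with fewer than two axis points, which yield $L_1$ or $L_2$ rather than $T_n$ --- these still land in the list, so nothing breaks).
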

\begin{proof} By Theorem~\ref{MainTheo}, $X$ is a subconfiguration of $D_{p,q}$ for some intgers or it is equivalent to $S_6$. If we remove any number of points in $D_{p,q}$, we obtain $L_n, T_n$ or $I_{p,q}$. The reciprocal is trivial since all those configurations are finitely completable.
\end{proof}

Even we have another characterisation of finiteness in the form of forbidden configurations. Let the infinite linear configuration $L_\infty=\{ (n, 0) \in \mathbb{R}^2 \mid n \in \mathbb{N}\}$.

\begin{theorem} \label{TheoSecondCharac} Let $X$ be a complete subset of the plane. The lattice $\Lat^2(X)$ is finite iff $X$ does not contain the $V_5$ and $L_\infty$ subconfigurations.
\end{theorem}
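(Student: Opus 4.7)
The plan is to reduce the theorem using Proposition~\ref{PropAtomistic}(iv): since $X=\overline{X}$ by completeness, $\Lat^2(X)$ is finite iff $X$ itself is finite. The theorem thereby becomes the claim that a complete $X\subseteq\mathbb{R}^2$ is finite iff it contains neither $V_5$ nor $L_\infty$ as a subconfiguration. The forward direction is immediate: if $\Lat^2(X)$ is finite then $X$ is finite, which excludes $L_\infty$; and if some $Y\subseteq X$ were equivalent to $V_5$, the inclusion $\Lat^2(Y)\subseteq\Lat^2(X)$ combined with the infinitude of $\Lat^2(Y)$ (by the argument of Lemma~\ref{LemmaTheo1}, which uses only the configuration structure of $V_5$) would contradict finiteness of $\Lat^2(X)$.

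For the converse, I assume $X$ complete with no $V_5$ and no $L_\infty$ subconfiguration, and show $X$ is finite by contradiction. The hypothesis $L_\infty\not\subseteq X$ says every line in $\mathbb{R}^2$ contains only finitely many points of $X$. If $X$ were infinite, it could not lie in a single line, so it would have three non-collinear points, and I could inductively extract five points of $X$ in general position: given $k\geq 3$ such points, the $\binom{k}{2}$ lines they span meet $X$ in a finite set while $X$ is infinite, so a $(k+1)$-st point of $X$ avoiding them exists.

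Given five points $\{x_1,\dots,x_5\}\subseteq X$ in general position, I invoke the constructions inside the proofs of Lemmas~\ref{LemmaTheo3} and \ref{LemmaTheo2}: the Erdős–Szekeres step of Lemma~\ref{LemmaTheo3} yields a convex quadrilateral $a,b,c,d$ among them, and the diagonal meet $x=ac\wedge bd$ produces a B-position quintuple $\{a,x,c,b,e\}$ with $e$ the fifth point; the argument of Lemma~\ref{LemmaTheo2} then exhibits five points forming a $V_5$ after adjoining at most one further segment intersection. The main obstacle is to verify that each such intersection point actually lies in $X$: each is a singleton meet in $\Lat^2(X)$, hence an atom, so by Proposition~\ref{PropAtomistic}(ii) it belongs to $\overline{X}=X$. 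This delivers a genuine $V_5$ subconfiguration inside $X$, contradicting the hypothesis and forcing $X$ to be finite.
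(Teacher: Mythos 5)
Your argument is correct in substance but follows a genuinely different route from the paper's. The paper proves the converse direction by induction on the size $r$ of a greatest maximal collinear set of $X$, leaning on the case analysis of Theorem~\ref{MainTheo}, and it is explicitly left as a sketch. You instead exploit completeness at the outset: by Proposition~\ref{PropAtomistic}(iv) together with $\overline{X}=X$, finiteness of $\Lat^2(X)$ is equivalent to finiteness of the point set $X$ itself, which converts the theorem into a purely combinatorial statement about $X$. Your extraction of five points in general position from an infinite set all of whose line sections are finite is clean and standard (and the non-collinearity conditions needed for B position hold automatically, since the lines $xb$ and $xa$ coincide with $bd$ and $ac$ respectively, so a degeneracy would contradict general position of the original five points). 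Your observation that completeness forces every singleton meet constructed in Lemmas~\ref{LemmaTheo2} and~\ref{LemmaTheo3} to lie in $X$ (directly from the definition of $\overline{X}$, or via Proposition~\ref{PropAtomistic}(ii)) is exactly what makes the contradiction land inside $X$ rather than merely inside $\Lat^2(X)$. This buys a complete argument where the paper only gestures at one, and it avoids the induction on $r$ entirely.

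One caveat, which you share with the paper rather than introduce: both arguments silently identify ``$X$ contains no $L_\infty$ subconfiguration'' with ``every line meets $X$ in a finite set,'' and ``$X$ contains five points in the pattern $a{-}b{-}c$, $a{-}d{-}e$, not all collinear'' with ``$X$ contains a $V_5$ subconfiguration.'' Under the paper's definition of subconfiguration (the image of a monomorphism preserving relative convex hulls), neither identification is automatic: a densely ordered infinite collinear subset admits no monomorphism from $L_\infty$, and five points of $X$ with the right collinearities need not form a subset equivalent to $V_5$ if further points of $X$ lie on the segments or triangles they span. Read strictly, this affects the theorem's statement itself, not just your proof; read with the looser ``pattern'' meaning the paper evidently intends, your proof is complete.
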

\begin{proof} The direction $(\Rightarrow)$ is trivial from Corollary~\ref{CoroFirst}. For the direction $(\Leftarrow)$, we sketch the argument and leave the details for the reader, which essentially follow the same techniques and lemmas of the above results.
It suffices to see that if we want to build a finite and complete configuration, there cannot appear a $L_\infty$ configuration nor $V_5$ configuration, which implies that neither can appear points in B position nor in general position, because we are assuming that $X$ is complete. The statement can be checked manually if $|X|<5$. If $|X|\geq 5$, we can assume there are at least three collinear points; otherwise, we have five points in general position, leading to an infinite configuration.
Of course, if the lattice is finite, then $L_\infty$ cannot be a subconfiguration. In particular, a greatest maximal collinear set of $X$ is finite. Let $r\geq 3$ be the size of this set. If $r=3$, then it can be proved that $X$ is equivalent to $L_3, T_3, A_3, D_{0,2},D_{1,1}$ or $S_6$. By induction on $r$, suppose that $X$ has a maximal collinear set of size $r$ with no $L_\infty$ nor $V_5$ configurations. Suppose $X'$ with a maximal collinear set of size $r+1$. $X'$ is obtained from $X$ by adding a point in that collinear set. If $V_5$ is a subconfiguration of $X'$, it can be proved that $V_5$ is a subconfiguration of $X$ or that $X$ contains a subconfiguration in B position, which also entails a $V_5$ subconfiguration.
\end{proof}

\section{An insight on the spatial and the general case}

\begin{figure}[tb] 
\centering
\vspace{7mm}
\begin{overpic}[scale=0.18]{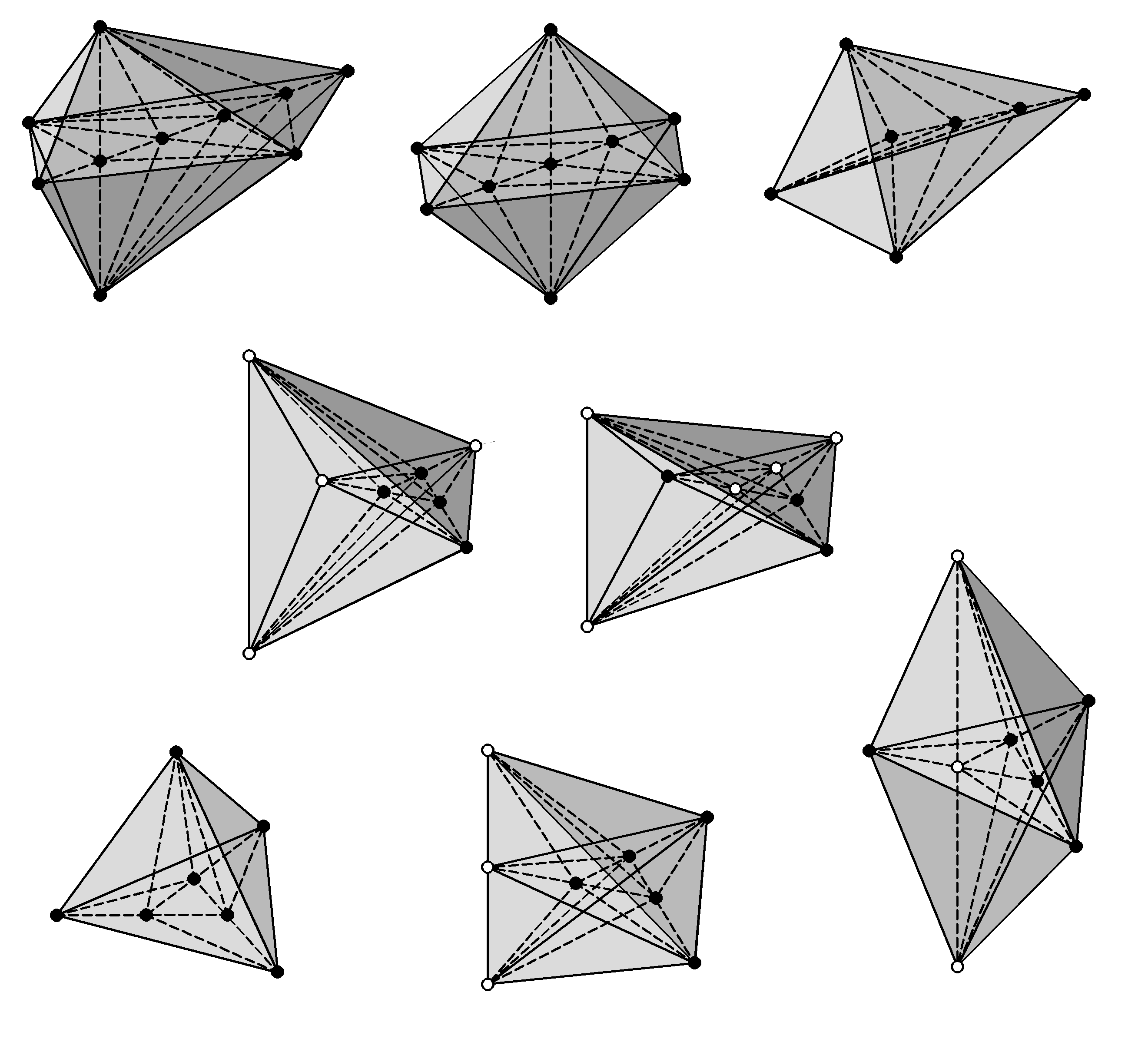}
\put (1,87) {(a)} 
\put (41,87) {(b)} 
\put (83,87) {(c)} 

\put (31,58) {(d)} 
\put (57,58) {(e)} 

\put (5,18) {(f)}
\put (38,18) {(j)} 
\put (73,18) {(g)} 
\end{overpic}
\caption{Some examples of spatial configurations derived from cross-operator and taking subconfigurations: (a) $D_{2,3}*c$, (b) $D_{2,2}*c$, (c) $D_{0,4}*c$. (d), (e), (j), (g) are of the form $S_6*c$, where white dots are collinear or coplanar points. (f) is a subconfiguration of $S_6*c$. }\label{Fig5}
\end{figure}

\begin{figure}[tb] 
\centering
\vspace{7mm}
\begin{overpic}[scale=0.13]{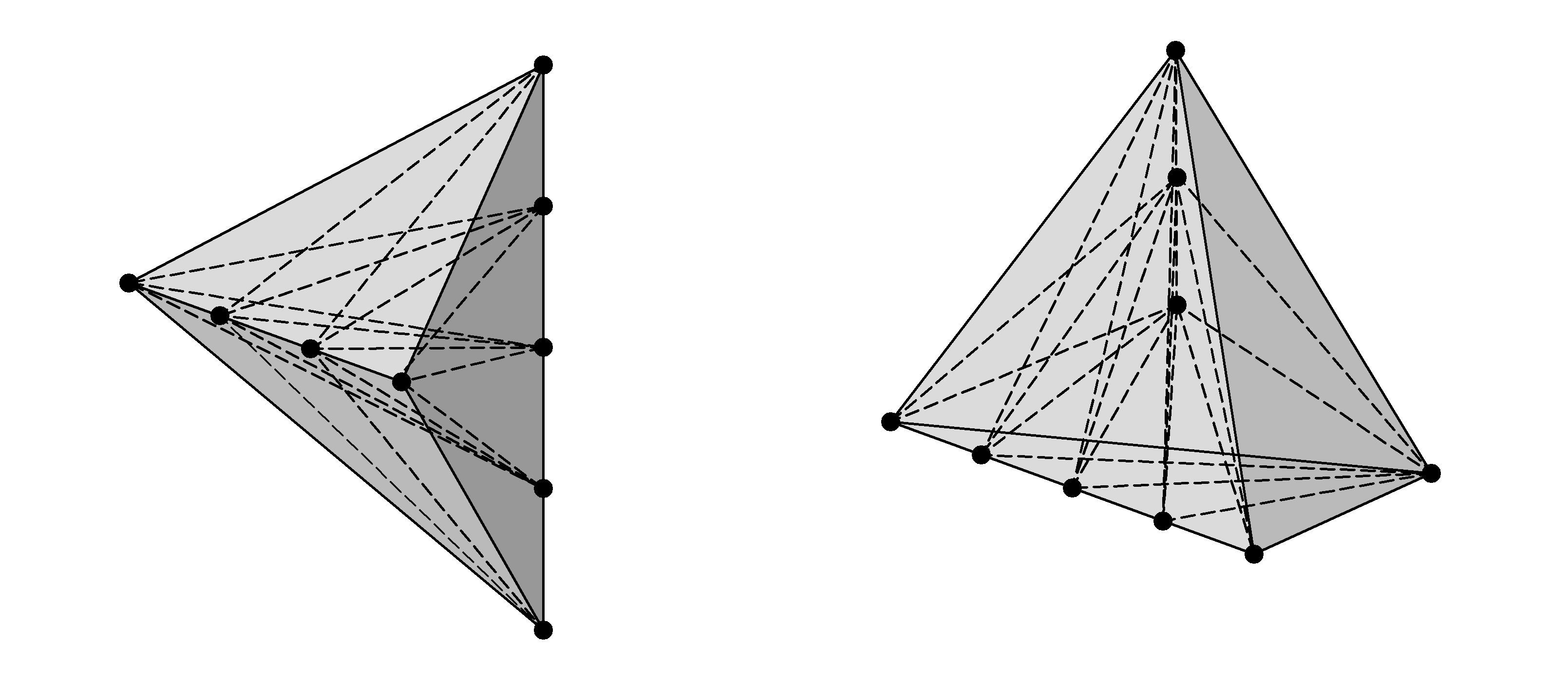}
\put (10,36) {(a)} 
\put (61,36) {(b)} 
\end{overpic}
\caption{Spatial configurations $L_4+L_5$ and $T_5+L_3$.} \label{Fig6}
\end{figure}

The natural continuation tackles the characterisation of finite lattices generated by points in space and the general case in any dimension. For example, the tetrahedron and the octahedron generate finite lattices, while the cube, dodecahedron, and icosahedron generate infinite lattices.

Consider the following operator from configurations in $\mathbb{R}^d$ into configurations in $\mathbb{R}^{d+1}$, which we will call \emph{cross-operator}.
Let $\pi$ be an hyperplane in $\mathbb{R}^{d+1}$ and let $X \subseteq \pi$ a configuration. Let $a$ and $b$ be two points on the opposite sides of $\pi$. Notice that $ab \cap \pi \not=\emptyset$ and that $\dim ab \cap \pi=0$, i.e., the intersection is a point, say $c$. Assume implicitly that $c \in X$ or $c \not \in \ch(X)$, and define $X*c=X\cup \{a,b\}$. Then, the cross-operator is well-defined.
Let $a,a'$ and $b,b'$ be points on the opposite sides of the hyperplane $\pi$, $X$ a configuration in $\pi$ such that $ab \cap \pi= a'b' \cap \pi$. We can define the isomorphism of configurations as $f(a)=a'$, $f(b)=b'$, and $f(x)=x$ for all $x \in X$.
Then, $X\cup \{a,b\} \equiv X \cup \{a',b'\}$, and $X*c$ only depends on the chosen $c$, not on the points $a,b$.

\begin{lemma} $\Lat^d(X)$ is finite iff $\Lat^{d+1}(X*c)$ is.
\end{lemma}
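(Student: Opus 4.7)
My plan is to prove the two implications separately, the reverse being routine and the forward requiring a structural analysis of $\Lat^{d+1}(X*c)$.

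$(\Leftarrow)$: Because $\pi$ is convex in $\mathbb{R}^{d+1}$, the operations $\vee=\ch(\cdot\cup\cdot)$ and $\wedge=\cap$ for subsets of $\pi$ coincide whether carried out inside $\pi$ or inside $\mathbb{R}^{d+1}$. Identifying $\pi$ with $\mathbb{R}^{d}$ via its affine parametrisation, the elements of $\Lat^d(X)$ are precisely those of $\Lat^{d+1}(X*c)$ contained in $\pi$, so $\Lat^d(X)$ embeds as a sublattice of $\Lat^{d+1}(X*c)$; finiteness of the larger forces finiteness of the smaller.

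$(\Rightarrow)$: Assuming $\Lat^d(X)$ finite, by Proposition~\ref{PropAtomistic}(iv) I may take $X$ finite and complete. My goal is to exhibit a finite family
\[
\mathcal{F}=\bigl\{Q,\ \ch(Q\cup\{a\}),\ \ch(Q\cup\{b\}),\ \ch(Q\cup\{a,b\}):Q\in \Lat^d(X)\bigr\}
\]
that is closed under $\vee$ and $\wedge$ and contains every singleton $\{x\}$ ($x\in X$), $\{a\}$, $\{b\}$. Because $|\mathcal{F}|\le 4\,|\Lat^d(X)|<\infty$, closure then forces $\Lat^{d+1}(X*c)\subseteq \mathcal{F}$, hence finite.

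Closure of $\mathcal{F}$ under $\vee$ is straightforward from the identity $\ch(A\cup B)=\ch(\ch A\cup \ch B)$, e.g.\ $\ch(Q_1\cup\{a\})\vee \ch(Q_2\cup\{b\})=\ch((Q_1\vee Q_2)\cup\{a,b\})$. Closure under $\wedge$ rests on the key geometric identity
\[
\ch(Q\cup\{a,b\})\cap \pi=\ch(Q\cup\{c\}),
\]
which I will verify by inspecting the coordinate normal to $\pi$ (a point on the left side and in $\pi$ is forced to be a convex combination of $Q$ and $c$). Combined with the elementary fact that two cones with a common apex intersect in the cone over the meet of their bases, this disposes of every meet case except $Q_1\wedge \ch(Q_2\cup\{a,b\})=Q_1\cap \ch(Q_2\cup\{c\})$.

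The main obstacle will be this last meet. When $c\in X$, $\ch(Q_2\cup\{c\})=Q_2\vee\{c\}\in \Lat^d(X)$, so the meet lies in $\Lat^d(X)$ and $\mathcal{F}$ closes. When instead $c\notin \ch(X)$, the hypothesis guarantees $ab\cap \ch(X)=\emptyset$, but new extreme points (such as the exit points of the segments $[c,q]$ on $\partial \ch(X)$) may appear in $\pi$; I will absorb them by first replacing $X$ by the enlarged complete configuration $\overline{X\cup\{c\}}$ in $\pi$ and observing that this remains finite, because the only extra extreme points produced are the finitely many arising as extreme points of $Q_1\cap \ch(Q_2\cup\{c\})$ for $(Q_1,Q_2)\in \Lat^d(X)\times \Lat^d(X)$. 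Once this auxiliary finiteness is in hand, we are in the previous ($c\in X$) subcase, the four-type family is closed under both operations, and the lemma follows.
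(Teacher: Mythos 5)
Your strategy coincides with the paper's: both arguments bound $|\Lat^{d+1}(X*c)|$ by $4\,|\Lat^d(X)|$ via the four-type family $Q,\ \ch(Q\cup\{a\}),\ \ch(Q\cup\{b\}),\ \ch(Q\cup\{a,b\})$, and your $(\Leftarrow)$ direction, your treatment of joins, the slice identity $\ch(Q\cup\{a,b\})\cap\pi=\ch(Q\cup\{c\})$, and the whole subcase $c\in X$ are correct.

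The gap is exactly where you suspected it, namely the case $c\notin\ch(X)$, and your proposed repair does not close it. You assert that $\overline{X\cup\{c\}}$ is finite because ``the only extra extreme points produced'' are those of the meets $Q_1\cap\ch(Q_2\cup\{c\})$; but completion is an iterated closure, and each new point creates new segments whose further intersections must again be adjoined, so this count does not terminate a priori. In fact $\overline{X\cup\{c\}}$ can be infinite while $\overline{X}$ is finite: take $d=2$, $X=T_3=\{a_1,a_2,a_3,s\}$ and $c$ outside $\triangle a_1a_3s$, in $H^+(a_1a_3,s)$ and not on the line $sa_2$; then $X\cup\{c\}$ is in B position, so by Lemma~\ref{LemmaTheo2} its completion is infinite. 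Worse, this is not merely a defect of your patch: in this example $\Lat^{3}(X*c)$ is itself infinite, because for each $x\in X$ the segment $xc$ is realised inside the lattice via $xc\wedge W=(x\vee a\vee b)\wedge W$ for planar $W$, so the single intersection point $w$ that Lemma~\ref{LemmaTheo2} needs (e.g.\ $a_1c\wedge sa_3$) is an element of $\Lat^{3}(X*c)$, and $\{a_1,a_2,a_3,s,w\}\equiv V_5$ then generates an infinite sublattice by Lemma~\ref{LemmaTheo1}, even though $\Lat^2(T_3)$ is finite. Hence under the stated hypothesis ``$c\in X$ or $c\notin\ch(X)$'' the forward implication is false for $d\ge 2$; you have in fact located the precise point that the paper's own proof elides when it asserts the four-form decomposition ``follows from the fact that either $c\in X$ or $c\notin\ch(X)$.'' Both the statement and your argument become correct if the hypothesis is strengthened to ``$c\in X$, or $X\cup\{c\}$ is finitely completable inside $\pi$'': then replacing $X$ by the finite complete configuration $\overline{X\cup\{c\}}$ reduces everything to your $c\in X$ subcase (and for $d=1$, where $c$ necessarily lies on the line spanned by $X$, this extra hypothesis holds automatically).
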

\begin{proof}  $(\Leftarrow)$ Trivial since $\Lat^d(X) \subseteq \Lat^{d+1}(X*c)$. $(\Rightarrow)$ We can construct a surjective mapping $\mathcal{C}_2 \times \mathcal{C}_2 \times \Lat^d(X) \longrightarrow  \Lat^{d+1}(X*c) $, although it is not a lattice epimorphism. Consider the two element lattices $\mathcal{L}_a=\{ \emptyset, \{a\}\}$ and $\mathcal{L}_b=\{ \emptyset, \{b\}\}$, where $ab\wedge \pi=\{c\}$, and $X \subseteq \pi$. Clearly $\mathcal{L}_a\cong \mathcal{L}_b \cong \mathcal{C}_2$. Notice that any element $Z \in \Lat^{d+1}(X*c)$ is of one of the forms:
$$Z=Z', \quad Z=\ch(Z' \cup\{a\}), \quad Z=\ch(Z' \cup \{b\}), \quad Z=\ch(Z' \cup \{a, b\}),$$
for some $Z'\in \Lat^d(X)$. This fact follows from the fact that either $c\in X$ or $c\not \in \ch(X)$. Then, consider the mapping:
\begin{align*}
\mathcal{L}_a \times \mathcal{L}_b \times \Lat^d(X) & \longrightarrow \Lat^{d+1}(X*c)\\
(A,B,C) & \longmapsto \ch(A \cup B \cup C)
\end{align*}
which by the above comment is surjective, and then $|\Lat^d(X*c)|\leq 4 |\Lat^d(X)|$.
\end{proof}

Except for the sporadic configuration $S_6$, any planar configuration is a subconfiguration of $L_n*c$ for some point $c$.
The cross operator and taking subconfiguration are productive forms to obtain higher dimensional configurations. We get many configurations in the space by this process; see some examples in Figure~\ref{Fig5}. However, not all spatial configurations come from planar configurations. For example, let $L_p+L_q$ be the union of $L_{p}$ and $L_{q}$, such that these linear configurations lie over two non-intersecting and non-parallel lines. The intersection of any pair of convex envelopments does not generate a new point, whereby $L_p+L_q$ is finite and complete; Figure~\ref{Fig6}(a). Similarly, denote by $T_p+L_q$ the union of $T_p$ and $L_q$ such that $L_q$ is in one of the semi-spaces delimited by the plane $\pi$ where lies $T_p$, and such that $L_q$ is not parallel to the axis of $T_p$; Figure~\ref{Fig6}(b). Clearly, $L_p+L_q$ is a subconfiguration of $T_p+L_q$: remove the star point of $T_p$. If the family $T_p+L_q$ were the only one not derived from planar configurations, then $\Lat^3(X)$ would be finite iff
\begin{enumerate}[(i)]
\item $X$ is a subconfiguration of $D_{p,q}*c$, or
\item $X$ is a subconfiguration of $S_6*c$, or
\item $X$ is a subconfiguration of $T_p+L_q$
\end{enumerate}
for some integers $p,q$, and some point $c$.
Unfortunately, we have not succeeded in proving it; many cases are under consideration.
We aim that some ideas in this paper open that and other questions, such as how the notion of point configuration is related to other problems in discrete geometry. Some combinatorial enumerative questions arise naturally, such as counting configurations or isomorphy classes of ch-lattices.

There is another generalisation of ch-lattices. Denote by $\Lat_k^d(X)$ the least ch-lattice containing the set of bounded polytopes $X$ of dimension as much $k$. In particular,  $\Lat_0^d(X)=\Lat^d(X)$. For example, given three non-collinear points $a,b,c$, the lattice $\Lat_1^2(\{ab,c\})$ is a sublattice of $\Lat_0^2(\{a,b,c\})$. More in general
$\Lat_k^d (X)$ is always a sublattice of $\Lat_0^d(\Extrem(X))$.  Whereby the finiteness of the latter implies the finiteness of the former. However, if  $V_5=\{a,b,c,d,e\}$, as in Lemma~\ref{LemmaTheo1}, $\Lat_1^2(\{ad, e, bc\})$ is finite, but $\Lat_1^2(\Extrem(\{ad, e, bc\}))=\Lat_0^d(V_5)$ is infinite. A last example from \cite{bergman2005lattices} considers the consecutive vertices of a regular hexagon $H=\{a_1, \ldots, a_6\}$. $\Lat_1^2(\{a_1a_4, a_2a_5, a_3a_6\})$ if infinite.
The problem of discerning when $\Lat_k^d(X)$ is finite seems, at first glance, more challenging than the point generated case, even in the planar case.




\section*{Declarations}

\begin{itemize}
\item Funding. Not applicable
\item Conflict of interest/Competing interests. No conflict of interest exists.
\item Ethics approval and consent to participate. Not applicable.
\item Consent for publication. Not applicable.
\item Data availability. Not applicable. 
\item Materials availability. Not applicable.
\item Code availability. Not applicable. 
\item Author contribution. Not applicable.
\end{itemize}





\bigskip
\bigskip

{\footnotesize {\bf First Author}\; \\ {Departament de Ci\`encies Bàsiques}, {Universitat Internacional de Catalunya, c/ Josep Trueta s/n,} {Sant Cugat del Vallès,  08195, Spain.}\\
{\tt Email: ccardo@uic.es}\\

\end{document}